\newtheorem{theorem}{Theorem}
\newtheorem{lemma}{Lemma}
\newtheorem{corollary}{Corollary}
\newtheorem{remark}{Remark}
\DeclareMathOperator{\cone}{cone}
\DeclareMathOperator{\inte}{int}
\DeclareMathOperator{\aff}{aff}
\DeclareMathOperator{\icore}{icore}
\DeclareMathOperator{\bdr}{bdr}
\DeclareMathOperator{\co}{co}
\newcommand{\lng}{\langle}
\newcommand{\rng}{\rangle}
\newcommand{\lf}{\left}
\newcommand{\rg}{\right}
\newcommand{\sa}{\sqcap}
\newcommand{\su}{\sqcup}
\newcommand{\R}{\mathbb R}
\newcommand{\N}{\mathbb N}
\newcommand{\f}{\frac}
\newenvironment{proof}{{\noindent\bf Proof.}}{\hfill$\Box$\\}
\begin{document}

\title{Self-dual cones, generalized lattice operations and isotone
projections
\thanks{{\it 1991 A M S Subject Classification:} Primary 90C33,
Secondary 15A48; {\it Key words and phrases:} convex sublattices, isotone projections. }}
%\author{\large A. B. N\'emeth, S. Z. N\'emeth}
\author{A. B. N\'emeth\\Faculty of Mathematics and Computer Science\\Babe\c s Bolyai University, Str. Kog\u alniceanu nr. 1-3\\RO-400084 Cluj-Napoca, Romania\\email: nemab@math.ubbcluj.ro \and S. Z. N\'emeth\\School of Mathematics, The University of Birmingham\\The Watson Building, Edgbaston\\Birmingham B15 2TT, United Kingdom\\email: nemeths@for.mat.bham.ac.uk}
\date{}
\maketitle

\begin{abstract}

By using the metric projection onto a closed self-dual cone of the Euclidean space,
M. S. Gowda, R. Sznajder and J. Tao have defined generalized lattice operations,
which in the particular case of the nonnegative orthant of a Cartesian reference
system reduce to the lattice operations of the coordinate-wise ordering. 
The aim of the present note is twofold: to give a geometric characterization
of the closed convex sets which are invariant with respect to these
operations, and to relate this invariance property to the isotonicity of the 
metric projection onto these sets. As concrete examples the Lorentz cone and the 
nonnegative orthant are considered. Old and recent results on closed convex
Euclidean sublattices  due to D. M. Topkis, A. F. Veinott and to M. Queyranne and  F. Tardella
, respectively are obtained as particular cases. The topic is related to 
variational inequalities where the isotonicity of the metric projection is 
an important technical tool. For Euclidean sublattices this approach was considered 
by G. Isac, H. Nishimura and E. A. Ok.

\end{abstract}

\section{Introduction}

A commonly used approach in establishing the solvability of variational inequalities and 
furnishing their solution is the usage of fixed point theorems and the iterative processes they
engender, respectively (e.g., \cite{Auslender1976,Bertsekas1989,Iusem1997,Khobotov1987,Korpelevich1976,Marcotte1991,Nagurney1993,Sibony1970,Solodov1999,Solodov1996,Sun1996}).

A specific route to follow during this endeavour is to derive monotone and convergent iterative
processes with respect to some order relations. For the particular case of nonlinear 
complementarity problems this approach was first initiated by G. Isac and A. B. N\'emeth. Both 
the solvability and the approximation of solutions of nonlinear complementarity problems 
can be handled by using the metric projection onto the convex cone associated with the problem. 
The idea to relate the ordering induced by the convex cone and the metric projection onto the 
convex cone goes back to their paper \cite{IsacNemeth1986}, where a convex cone in the 
Euclidean space which admits an isotone projection onto it (called 
\emph{isotone projection cone}) was characterized. The isotonicity is considered with respect 
to the order induced by the convex cone.
% 
%The same authors \cite{IsacNemeth1990} and S. J. Bernau
%\cite{Bernau1991} considered the similar problem for the Hilbert space. 

The isotone projection cones were used in the solution of
some nonlinear complementarity problems \cite{IsacNemeth1990b}, \cite{IsacNemeth2008c},
\cite{Nemeth2009a}. Solving complementarity problems by successive approximation
require repeated projection onto the underlying cone. It is particularly meaningful that this 
is an efficient procedure for isotone projection cones \cite{NemethNemeth2009}.

%It turns out that the isotonicity
%of the projection is a very strong requirement which implies the latticiality of the order 
%induced by the convex cone. S. O. Ferreira and S. Z. N\'emeth have showed that 
%this is true even if the projection is with respect to a norm 
%which is not generated by the scalar product of the Hilbert space or an even more general
%functional \cite{FerreiraNemeth}.
%Thus, the investigation of the isotone projection cones becomes part of theory of
%latticially ordered Hilbert spaces. In this context it has been
%related by G. Isac and L-E. Persson to other lattice
%theoretic notions in \cite{IsacPersson1996,IsacPersson1998}.

If the projection onto the closed convex set encountered in the definition
of a variational inequality is monotone with respect to an appropriate order
relation, then an iterative method can be worked out for its solution.
An easily handleable order relation in the Euclidean space is the coordinate-wise 
ordering. G. Isac \cite{Isac1996} showed that the projection onto a closed convex set is 
isotone with respect to this order relation if the set is a sublattice.

In a recent paper H. Nishimura and E. A. Ok \cite{NishimuraOk2012}
showed that latticiality is also a necessary condition for the isotonicity
of the metric projection. In the last cited paper several applications
were given for variational inequalities defined on closed convex sublattices and other related
equilibrium problems.
But how do the closed convex sublattices with nonempty interior of the coordinate-wise
ordered Euclidean space look? The answer to this question seems to go back to
the results of D. M. Topkis \cite{Topkis1976} and A. F. Veinott Jr. \cite{Veinott1981}
and was settled recently by M. Queyranne and  F. Tardella \cite{QueyranneTardella2006}.

%The closed convex sublattices
%with nonempty interior are so called complete sublattices in the sense
%of G. Gierz and A. R. Stralka \cite{GierzStralka1994}. The investigations
%in the line of complete sublattices have merely an algebraic and topological character and
%does not refer explicitely to convexity.

The positive cone of the coordinate-wise ordering is the nonnegative orthant
of a Cartesian reference system in the Euclidean space. It is a self-dual latticial cone and 
defines well behaved lattice operations. Although largely investigated, they are very 
restrictive. Among the attempts to extend these lattice operations, one concerning self-dual
cones and intrinsically related to metric projections is that proposed
by M. S. Gowda, R. Sznajder and J. Tao \cite{GowdaSznajderTao2004}.
Fortunatelly these extended lattice operations, apart from keeping several properties of 
lattice operations, seem to be good tools in handling the problem of the isotonicity of 
the metric projections. 

In this note we characterize the closed convex sets which are invariant with respect to these 
operations showing that the metric projection onto these sets is isotone with respect to the 
order generated by the self-dual cone giving rise to the respective operations.

The structure of this paper is as follows. In Section \ref{Self-dual cones} we will define
the notion of self-dual cones and as particular examples the nonnegative orthant and the 
Lorentz cone. In Section \ref{Generalized lattice operations} we will define the lattice
operations for the nonnegative orthant and extend these operations to a self-dual cone. In
the same section we state our main results, namely Theorems \ref{FOO}, \ref{ISO} and 
\ref{POLYH}, which will be proved in Sections \ref{spt1}, \ref{spt2} and \ref{spt3}
In Section \ref{prop ext latt} we will give a series of properties for
the extended lattice operations defined by a self-dual cone used in the later sections. 
Sections \ref{sec inv} and \ref{sec iso} contain several lemmas needed to prove our main 
results: Theorems \ref{FOO}, \ref{ISO} and \ref{POLYH}. Exception is Lemma \ref{minimal} 
which together with Corollary \ref{ketdiminv} are used in Sections \ref{sect no} 
and \ref{sect lo} only. However the above lemma and corollary exhibit fundamental 
geometric properties of the extended lattice operations. The main results are also 
motivated by the particular cases of the nonnegative orthant and Lorentz cone investigated
in Sections \ref{sect no} and \ref{sect lo}, respectively. Finally, we end our paper by
making some comments and raising some open questions in Section \ref{concl} 

\section{Self-dual cones}\label{Self-dual cones}

Denote by $\R^m$ the $m$-dimensional Euclidean space endowed with the scalar 
product $\lng\cdot,\cdot\rng:\R^m\times\R^m\to\R,$ and the Euclidean norm $\|.\|$ and topology 
this scalar product defines.

Throughout this note we shall use some standard terms and results from convex geometry 
(see e.g. \cite{Rockafellar1970}). 

Let $K$ be a \emph{convex cone} in $\R^m$, i. e., a nonempty set with
(i) $K+K\subset K$ and (ii) $tK\subset K,\;\forall \;t\in \R_+ =[0,+\infty)$.
The convex cone $K$ is called \emph{pointed}, if $(-K)\cap K=\{0\}.$

The cone $K$ is {\it generating} if $K-K=\R^m$.
 
For any $x,y\in \R^m$, by the equivalence $x\leq_K y\Leftrightarrow y-x\in K$, the 
convex cone $K$ 
induces an {\it order relation} $\leq_K$ in $\R^m$, that is, a binary relation, which is 
reflexive and transitive. This order relation is {\it translation invariant} 
in the sense that $x\leq_K y$ implies $x+z\leq_K y+z$ for all $z\in \R^m$, and 
{\it scale invariant} in the sense that $x\leq_Ky$ implies $tx\leq_K ty$ for any $t\in \R_+$.
If $\leq$ is a translation invariant and scale invariant order relation on $\R^m$, then 
$\leq=\leq_K$ with $K=\{x\in\R^m:0\leq x\}.$ If $K$ is pointed, then $\leq_K$ is 
\emph{antisymmetric} too, that is $x\leq_K y$ and $y\leq_K x$ imply that $x=y.$
The elements $x$ and $y$ are called \emph{comparable} if $x\leq_K y$
or $y\leq_K x.$

We say that $\leq_K$ is a \emph{latticial order} if for each pair
of elements $x,y\in \R^m$ there exist the lowest upper bound
$\sup\{x,y\}$ and the uppest lower bound $\inf\{x,y\}$ of
the set $\{x,y\}$ with respect to the order relation $\leq_K$.
In this case $K$ is said a \emph{latticial or simplicial cone},
and $\R^m$ equipped with a latticial order is called an
\emph{Euclidean vector lattice}.

The \emph{dual} of the convex cone $K$ is the set
$$K^*:=\{y\in \R^n:\;\lng x,y\rng \geq 0,\;\forall \;x\in K\},$$
with $\lng\cdot,\cdot\rng $ the standard scalar product in $\R^n$.

The cone $K$ is called \emph{self-dual}, if $K=K^*.$ If $K$
is self-dual, then it is a generating, pointed, closed cone.

In all that follows we shall suppose that $\R^m$ is endowed with a
Cartesian reference system with the standard unit vectors $e_1,\dots,e_m$.
That is, $e_1,\dots,e_m$ is an orthonormal system of vectors in the sense
that $\lng e_i,e_j\rng =\delta_i^j$, where $\delta_i^j$ is the Kronecker symbol.
Then, $e_1,...,e_m$ form a basis of the vector space $\R^m$. If $x\in \R^m$, then
$$x=x^1e_1+...+x^me_m$$
can be characterized by the ordered $m$-tuple of real numbers $x^1,...,x^m$, called
\emph{the coordinates of} $x$ with respect the given reference system, and we shall write
$x=(x^1,...,x^m).$ With this notation we have $e_i=(0,...,0,1,0,...,0),$
with $1$ in the $i$-th position and $0$ elsewhere. Let
$x,y\in \R^m$, $x=(x^1,...,x^m)$, $y=(y^1,...,y^m)$, where $x^i$, $y^i$ are the coordinates of
$x$ and $y$, respectively with respect to the reference system. Then, the scalar product of $x$
and $y$ is the sum
$\lng x,y\rng =\sum_{i=1}^m x^iy^i.$

The set
\[\R^m_+=\{x=(x^1,...,x^m)\in \R^m:\; x^i\geq 0,\;i=1,...,m\}\]
is called the \emph{nonnegative orthant} of the above introduced Cartesian
reference system. A direct verification shows that $\R^m_+$ is a
self-dual cone.

The set
\[L_{m+1}=\{(x,x^{m+1})\in\R^{m}\otimes\R=\R^{m+1}:\;\|x\|\leq x^{m+1}\},\] (or simply $L$ if 
there is no confusion about the dimension) is a self-dual cone called 
\emph{$m+1$-dimensional second order cone}, or 
\emph{$m+1$-dimensional Lorentz cone}, or \emph{$m+1$-dimensional ice-cream cone} 
(\cite{GowdaSznajderTao2004}). 

The nonnegative orthant $\R^m_+$ and the Lorentz cone $L$ defined above are the
most important and largery used self-dual cones in the Euclidean space.
But the family of self-dual cones is rather rich
\cite{BarkerForan1976}.

\section{Generalized lattice operations}\label{Generalized lattice operations}

A \emph{hypersubspace} or a \emph{hyperplane through the origin}, is a set of form
\begin{equation}\label{hypersubspace}
H(u,0)=\{x\in \R^m:\;\lng u,x\rng =0\},\;\;u\not=0.
\end{equation}
For simplicity the hypersubspaces will also be denoted by $H$.
The nonzero vector $u$ in the above formula is called \emph{the normal}
of the hyperplane. 
%For the sake of commodity \emph{in all what follows
%we shall assume that} $\|u\|=1$, that is, that the normal $u$ of the hyperplane
%$H$ is a \emph{unit vector}.

A \emph{hyperplane} (through $a\in\R^m$) is a set of form
\begin{equation}\label{hyperplane}
H(u,a)=\{x\in \R^m:\;\lng u,x\rng =\lng u,a\rng, \;u\not= 0\}.
\end{equation}

A hyperplane $H(u,a)$ determines two \emph{closed halfspaces} $H_-(a,u)$ and
$H_+(u,a)$  of $\R^m$, defined by

\[H_-(u,a)=\{x\in \R^m:\; \lng u,x\rng \leq \lng u,a\rng\},\]
and
\[H_+(u,a)=\{x\in \R^m:\; \lng u,x\rng \geq \lng u,a\rng\}.\]

Taking a Cartesian reference system in $\R^m$ and using the above introduced
notations,
the 
\emph{coordinate-wise order}  $\leq$ in $\R^m$ is defined by
\[x=(x^1,...,x^m)\leq y=(y^1,...,y^m)\;\Leftrightarrow\;x^i\leq y^i,\;i=1,...,m.\]
Using the notion of the order relation induced by a cone, defined in the preceding
section, we see that $\leq =\leq_{\R^m_+}$.

With the above representation of $x$ and $y$, we define
$$x\wedge y=(\min \{x^1,y^1\},...,\min \{x^m,y^m\}),\;\;\textrm{and}\;\;x\vee y=(\max \{x^1,y^1\},...,\max \{x^m,y^m\}).$$

Then, $x\wedge y$ is  the uppest lower bound and $x\vee y$ is the lowest upper bound of 
the set $\{x,y\}$ with respect to the coordinate-wise order. Thus, $\leq$ is a lattice order in $\R^m.$
The operations $\wedge$ and $\vee$ are called \emph{lattice operations}.

The subset $M\subset \R^m$ is called a \emph{sublattice of
the coordinate-wise ordered Euclidean space} $\R^m$, if from
$x,y\in M$ it follows that $x\wedge y,\;x\vee y\in M.$ 

Denote by $P_D$ 
the projection mapping onto a nonempty closed convex set $D\subset \R^m,$ 
that is the mapping which associate
to $x\in \R^m$ the unique nearest point of $x$ in $D$ (\cite{Zarantonello1971}):

\[ P_Dx\in D,\;\; \textrm{and}\;\; \|x-P_Dx\|= \inf \{\|x-y\|: \;y\in D\}. \]

The nearest point $P_Dx$ can be characterized by

\begin{equation}\label{charac}
P_Dx\in D,\;\;\textrm{and}\;\;\lng P_Dx -x,P_Dx-y\rng \leq 0 ,\;\forall y\in D.
\end{equation}

From the definition of the projection and the characterization (\ref{charac}) there follow immediately 
the relations: 

\begin{equation}\label{en}
	P_D(-x)=-P_{-D}x ,
\end{equation}

\begin{equation}\label{et}
	P_{x+D}y=x+P_D(y-x) 
\end{equation}
for any $x,y\in\R^m$,

\begin{equation}\label{sun}
P_D(tx+(1-t)P_Dx)=P_Dx,\;\;\forall \;t\in[0,1].
\end{equation}

\emph{In all what follows next $K\subset \R^m$ will denote a self-dual cone.}

Define the following operations in $\R^m$: 
\[x\sa y=P_{x-K}y,\;\,\textrm{and}\;\; x\su y=P_{x+K}y,\]
(\cite{GowdaSznajderTao2004}). Assume 
the operations $\su$ and $\sa$ have precedence over the addition of vectors and 
multiplication of vectors by scalars.

A direct checking yields that if $K=\R^m_+$, then $\sa =\wedge$, and $\su =\vee$.
That is $\sa$ and $\su$ are some \emph{generalized lattice operations}.
Moreover: $\sa$ and $\su$ \emph{are lattice operations if and only if
the self-dual cone used in their definitions is a nonnegative orthant of
some Cartesian reference system.}

The subset $M\subset \R^m$ is called \emph{invariant with respect to
$\sa$ and $\su$} if for any $x,y\in M$ we have $x\sa y,\;x\su y\in M.$
That is, such an invariant set is the analogous for generalized lattice operations
of a sublattice for lattice operations.

We are now ready to state our main results in form of three theorems,
namely Theorems \ref{FOO}, \ref{ISO} and \ref{POLYH}, which 
will be proved in Sections \ref{spt1}, \ref{spt2} and \ref{spt3}, respectively.

\begin{theorem}\label{FOO}
The closed convex set $C\subset\R^m$ with nonempty interior is invariant
with respect to the operations $\sa$ and $\su$ defined by some self-dual cone
if and only if it is of form
\begin{equation}\label{vegso}
C=\cap_{i\in \N} H_-(u_i,a_i),
\end{equation}
where each hyperplane $H(u_i,a_i)$ is tangent to $C$ and is invariant with respect to $\sa$ and $\su$.
\end{theorem}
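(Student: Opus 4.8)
The plan is to establish the two implications separately, treating the ``if'' direction as essentially immediate and concentrating the work on the ``only if'' direction, where a supporting hyperplane at each boundary point must be manufactured so that it is itself invariant under $\sa$ and $\su$.

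First I would dispose of the easy direction. Suppose $C=\cap_{i\in\N}H_-(u_i,a_i)$ where each $H(u_i,a_i)$ is invariant with respect to $\sa$ and $\su$. Given $x,y\in C$, I want $x\sa y\in C$ and $x\su y\in C$, i.e.\ $x\sa y\in H_-(u_i,a_i)$ for every $i$, and likewise for $\su$. Here is where I expect to need a lemma from the later ``properties'' section (Section \ref{prop ext latt}): that the invariance of a \emph{halfspace} under $\sa$ and $\su$ is inherited from the invariance of its bounding \emph{hyperplane}, or more precisely that if $H(u_i,a_i)$ is invariant then so is $H_-(u_i,a_i)$ — this should follow because $x\sa y$ and $x\su y$ are metric projections onto translates of $K$, so one can compare $\lng u_i,x\sa y\rng$ with $\lng u_i,a_i\rng$ using the characterization (\ref{charac}) and the fact that $x,y$ are already on the correct side. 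Granting such a lemma, intersecting over $i\in\N$ preserves invariance trivially, so $C$ is invariant.

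For the ``only if'' direction, assume $C$ is closed, convex, with nonempty interior, and invariant under $\sa$ and $\su$. Since $C$ is closed convex with nonempty interior, it is the intersection of its supporting halfspaces, and by a standard separability argument in $\R^m$ one may take a countable subfamily $\{H_-(u_i,a_i)\}_{i\in\N}$ of supporting halfspaces whose intersection is still $C$ — e.g.\ pick a countable dense set of boundary points, or a countable dense set of exposed directions. The real content is to show that these supporting hyperplanes can be chosen \emph{invariant}. This is where I would invoke the geometric lemmas of Sections \ref{sec inv} and \ref{sec iso}: the key structural fact must be that a supporting hyperplane $H(u,a)$ of an invariant body $C$, with contact point $p\in\bdr C$, is forced to be invariant, because if it were not, the points $x\sa y$ or $x\su y$ for suitable $x,y$ near $p$ on $H$ would be pushed strictly off $H$ on the wrong side, contradicting $x,y\in C\subset H_-(u,a)$ together with the invariance of $C$ and the local structure of $C$ near $p$. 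Concretely I would argue: take $x,y\in H(u,a)\cap C$; their projections $x\sa y=P_{x-K}y$ and $x\su y=P_{x+K}y$ lie in $C$ by invariance, hence in $H_-(u,a)$; a separate lemma (about when such projections can leave a hyperplane) should then force $x\sa y,x\su y\in H(u,a)$ whenever $H$ supports an invariant set, giving invariance of $H$.

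The main obstacle will be precisely this last step — proving that a supporting hyperplane of an invariant convex body is automatically invariant. The difficulty is that $\sa$ and $\su$ are genuinely nonlinear (they are metric projections onto translated cones), so one cannot simply use linearity of $\lng u,\cdot\rng$; one needs the variational characterization (\ref{charac}) of the projection together with self-duality of $K$ to control the sign of $\lng u, x\sa y\rng-\lng u,a\rng$. I would expect the argument to go through a ``tangency'' lemma isolating, for a hyperplane $H(u,a)$ supporting $C$ at $p$, the relationship between $u$, the cone $K$, and the cones $p\pm K$, showing that the projection of a point of $H$ onto $x\mp K$ (for $x\in H$) stays in $H$ exactly when $u$ is suitably related to $K$ — and that the supporting condition plus invariance of $C$ forces that relationship. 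Once the invariance of each supporting hyperplane is in hand, combining it with the countable-representation remark completes the proof.
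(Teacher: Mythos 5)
Your overall architecture coincides with the paper's: the ``if'' direction is exactly its route (the halfspace/hyperplane transfer is Lemma \ref{halfhyper}, and closedness of invariance under intersections is item (i) of Lemma \ref{latprop}), and representing $C$ as a countable intersection of supporting halfspaces is how the paper starts the converse (via \cite{Rockafellar1970}, Theorem 25.5, choosing a countable dense set of boundary points at which $\bdr C$ is differentiable). The genuine gap is at the step you yourself flag as the main obstacle. Your concrete argument for showing that a supporting hyperplane $H(u,a)$ of the invariant set $C$ is invariant only considers pairs $x,y\in H(u,a)\cap C$. That cannot establish invariance of $H$: invariance is a statement about \emph{arbitrary} pairs $x,y\in H(u,a)$, possibly far from $C$, while the contact set $H(u,a)\cap C$ may be a single point (e.g.\ when $C$ is strictly convex), in which case your argument says nothing at all. (Even for pairs in the contact set, passing from $x\sa y,\,x\su y\in C\subset H_-(u,a)$ to membership in $H(u,a)$ is done in the paper with the identity $x\sa y+x\su y=x+y$, item (v) of Lemma \ref{ll}, not with an unspecified ``separate lemma'' --- but that is the minor issue.)

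What the paper actually proves at this point (Lemma \ref{tanginvar}) is the technical heart of the theorem and is missing from your sketch. Working at a tangency point where $\bdr C$ is differentiable --- this is precisely why the countable family of hyperplanes is selected among such points --- one assumes $H$ is not invariant, uses translation and the identity in item (v) of Lemma \ref{ll} to produce a symmetric pair, then uses positive homogeneity (item (vii)) to shrink it to a small segment $[-x,x]\subset H$ through the tangency point with $(-x)\sa x$ at distance at least $\delta>0$ inside $\inte H_+$; projecting this segment along the normal onto $\bdr C$ gives a curve $\gamma(t)=tx+o(t)$, and the Lipschitz-type estimate of item (x) of Lemma \ref{ll} then forces $\gamma(-t)\sa\gamma(t)\in\inte H_+$ for small $t>0$, i.e.\ two points of $C$ whose $\sa$ leaves $H_-\supset C$, contradicting the invariance of $C$. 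Your closing appeal to a ``tangency lemma'' relating $u$, $K$ and $p\pm K$ is a restatement of what must be proved rather than an argument; without something like this scaling-plus-differentiability limit, the ``only if'' direction is not established.
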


The interest of this theorem resides in the reduction of the
problem of invariance of a closed convex set $C\subset \R^m$ with nonempty interior 
with respect to the operations $\sa$ and $\su$ to the
characterization of the hyperplanes with this property
in the representation (\ref{vegso}) of $C$. In the important
case of the Lorentz cone and respective the nonnegative orthant
the invariant hyperplanes have rather simple geometric characterizations.

As we have remarked, in the case of $K=\R^m_+$ the invariant sets are
the so called sublattices of the coordinate-wise ordered Euclidean space.
As far as we know, the geometric characterization of closed convex sublattices
of the coordinate-wise ordered Euclidean space goes back to  D. M. Topkis \cite{Topkis1976}
and A. F. Veinott \cite{Veinott1981} and it was revisited recently by
M. Queyranne and F. Tardella \cite{QueyranneTardella2006}.
The above theorem can be considered the generalization of the
main result in the last cited paper with the remark that there the
hyperplanes in (\ref{vegso}) are geometrically
characterized. (We shall give the characterization of these hyperplanes
using an independent proof in the final section of our note, giving this way
a different proof of the main result in \cite{QueyranneTardella2006}.)

Let $\prec$ a given order relation in $\R^m$.
A closed convex set $C$ is called \emph{isotone projection set} 
and $P_C$ \emph {isotone projection with respect to $\prec$} if $P_C$ is order 
preserving with respect to $\prec$, i.e., if $x\prec y$ implies that $P_Cx\prec P_Cy.$
In all what follows we take $\prec =\leq_K$ with a given fixed self-dual cone $K\subset\R^m.$
Since there is no ambiguity, we shall use $\leq$ in place of $\leq_K$ and the term 
\emph{isotone projection} in place of isotone projection with respect to $\leq$. 

\begin{theorem}\label{ISO}
Let $K\subset \R^m$ be a self-dual cone and $\sa$ and $\su$ the above
generalized lattice operations defined with the aid of $K$.
Let $C\subset\R^m$ be a nonempty closed convex set. If $C$ is invariant with 
respect to the operations $\sa$ and $\su$, then $C$ is an isotone projection set.
\end{theorem}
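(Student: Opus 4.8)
The plan is to adapt the classical argument for the metric projection onto a closed convex sublattice of the coordinate-wise ordered $\R^m$, the new ingredient being a single inner-product inequality that replaces the coordinate-by-coordinate estimate. Fix $x\le y$, i.e.\ $y-x\in K$, and set $a=P_Cx$, $b=P_Cy$; the goal is to show $b-a\in K$. Since $C$ is invariant, $a\sa b$ and $a\su b$ both lie in $C$, so, because $a=P_Cx$ and $b=P_Cy$ realise the minimal distances from $x$ and $y$ to $C$, we have $\|x-a\|\le\|x-(a\sa b)\|$ and $\|y-b\|\le\|y-(a\su b)\|$, and therefore
\[\|x-a\|^2+\|y-b\|^2\ \le\ \|x-(a\sa b)\|^2+\|y-(a\su b)\|^2 .\]
Everything then comes down to proving the reverse of this inequality under the hypothesis $x\le y$.

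To get the reverse inequality I would first record the algebraic form of the generalized operations. From (\ref{et}), (\ref{en}) and the self-dual Moreau decomposition $z=P_Kz+P_{-K}z$ (with orthogonal summands --- this, and the properties used below, will be among those collected in Section~\ref{prop ext latt}, and in any case it is a short consequence of (\ref{charac}) together with $K=K^*$) one obtains $a\su b=a+P_K(b-a)$, $a\sa b=a-P_K(a-b)$, the conservation identity $a\sa b+a\su b=a+b$, and hence $a-(a\sa b)=P_K(a-b)\in K$ and $b-(a\su b)=(a\sa b)-a$. Substituting the conservation identity into the four squared norms and expanding, a direct computation collapses the difference to
\[\|x-a\|^2+\|y-b\|^2-\|x-(a\sa b)\|^2-\|y-(a\su b)\|^2\ =\ 2\lng y-x,\ a-(a\sa b)\rng\ +\ 2\lng a-(a\sa b),\ (a\sa b)-b\rng .\]
Both terms on the right are nonnegative: the first because $y-x\in K$ and $a-(a\sa b)\in K$, so self-duality of $K$ gives $\lng y-x,\,a-(a\sa b)\rng\ge 0$; the second because the projection inequality (\ref{charac}) for $a\sa b=P_{a-K}(b)$, tested at the admissible point $a\in a-K$, says $\lng(a\sa b)-b,\,(a\sa b)-a\rng\le 0$, which rearranges to $\lng a-(a\sa b),\,(a\sa b)-b\rng\ge 0$. (The second term in fact vanishes by orthogonality of the Moreau summands, so the whole expression equals $2\lng y-x,\,a-(a\sa b)\rng$ --- the coordinate-free analogue of $\sum_i(y^i-x^i)(a^i-b^i)^+$ in the sublattice case.) Carrying out this cancellation correctly, and recognising the surviving inner product as a pairing of two vectors of $K$, is the step I expect to be the main obstacle.

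Combining the two displayed inequalities forces equality in the first one; since each summand on its left-hand side is at most the corresponding summand on the right, both are equalities, so in particular $\|x-a\|=\|x-(a\sa b)\|$. Because $a\sa b\in C$ and the nearest point of $C$ to $x$ is unique, $a=a\sa b$, hence $a-(a\sa b)=P_K(a-b)=0$, and the Moreau decomposition gives $a-b=P_{-K}(a-b)\in -K$, i.e.\ $b-a\in K$ (equivalently, one may simply invoke the property $a\sa b\le b$, which together with $a=a\sa b$ yields $a\le b$). Thus $P_Cx\le P_Cy$. Since nothing in the argument used that $C$ has nonempty interior, this shows that every nonempty closed convex set invariant with respect to $\sa$ and $\su$ is an isotone projection set.
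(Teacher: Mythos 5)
Your proof is correct and follows essentially the same route as the paper's: both compare the distances from $x$ and $y$ to the points $P_Cx\sa P_Cy$ and $P_Cx\su P_Cy$ of $C$ with those to $P_Cx$ and $P_Cy$, and reduce everything, via the identity $u\sa v+u\su v=u+v$ and the orthogonality relation of Lemma \ref{ll}(viii), to the sign of the pairing of $y-x\in K$ with $P_Cx\su P_Cy-P_Cy\in K$ (your $\lng y-x,\,a-a\sa b\rng$ is the same quantity) together with the self-duality of $K$. The only difference is presentational: the paper argues by contradiction using the strict inequality supplied by the equality case in Lemma \ref{ll}, whereas you force equality in the two distance inequalities and conclude from the uniqueness of the nearest point.
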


This result for $K=\R^m_+$ is due to G. Isac \cite{Isac1996}. As have remarked recently
H. Nishimura and E. A. Ok \cite{NishimuraOk2012}, for this case the converse of theorem is also true:
from the isotonicity of $P_C$ it follows that $C$ is a sublattice.

Let $M\subset \R^m$ be a nonempty, closed convex set.
The nonempty subset $M_0\subset M$ is a \emph{face} of
$M$, if from $x,y\in M$ and $tx+(1-t)y \in M_0$, for some
$t\in ]0,1[$, it follows that $x,y\in M_0.$ The face $M_0\subset M$
is called \emph{proper face of $M$} if $M_0\not= M.$

If $\inte M \not= \emptyset$ and $M_0$ is a face
of $M$ with $\dim M_0 =m-1,$ then $M_0$ is
called a \emph{hyperface of $M$}.

The subset
\begin{equation}\label{polyhedron}
C=\cap_{i=1}^q H_-(u_i,a_i),
\end{equation}
is called a \emph{polyhedron}.

Suppose that $\inte C\not= \emptyset$ and that
the representation (\ref{polyhedron}) is \emph{sharp} in the sense that
no member
in the intersection representing $C$ is redundant.
Then, 
$$C^i=C\cap H(u_i,a_i)$$
is a hyperface of $C$ $(i=1,...,q)$, and the
normal $u_i$ in the representation of $H_-$ will be
called a \emph{normal of} $C$  $(i=1,...,q).$ Obviously, $\aff C^i=H(u_i,a_i),$
where $\aff C^i$ denotes the affine hull of $C^i$.
In the particular case of a polyhedron $C$ with nonempty interior, we can
strengthen and join the results in Theorem \ref{FOO} and Theorem \ref{ISO} as follows:

\begin{theorem}\label{POLYH}
Let $C$ be a polyhedron with nonempty interior, represented by 
\begin{equation}\label{redu}
C=\cap_{i=1}^q H_-(u_i,a_i),
\end{equation}
where the representation (\ref{redu}) is sharp in the
sense that each set $H(u_i,a_i)\cap C$ is a hyperface of $C$.
Suppose further that $K$ is a self-dual cone and $\sa$ and $\su$
are the generalized lattice operations defined with the aid of it.

Then, the following assertions are equivalent:
\begin{enumerate}
\item [\emph{(i)}] The polyhedron $C$ is a invariant set with respect to the
operations $\sa$ and $\su$;
\item [\emph{(ii)}] The projection $P_C$ is isotone with respect to the order
relation defined by $K$;
\item [\emph{(iii)}] Each hyperplane $H(u_i,a_i),\;i=1,...,q$ is invariant with respect to
the operations $\sa$ and $\su$;
\item [\emph{(iv)}] Each hyperplane $H(u_i,a_i),\;i=1,...q$ is an isotone projection set;
\item [\emph{(v)}] Each proper face of $C$ is invariant with respect to the
operations $\sa$ and $\su$.
\end{enumerate}
\end{theorem}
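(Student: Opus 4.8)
The plan is to prove the equivalences by establishing a cycle together with a few direct implications, exploiting the already-stated Theorems \ref{FOO} and \ref{ISO} to shortcut much of the work. Theorem \ref{ISO} gives (i)$\Rightarrow$(ii) immediately, with no extra hypotheses needed, and also gives (iii)$\Rightarrow$(iv) and (v)$\Rightarrow$(ii)-type consequences once we know the faces in question are themselves closed convex invariant sets. Theorem \ref{FOO} is the engine for the reverse direction: since $C$ has nonempty interior and is written by the sharp representation (\ref{redu}), that theorem tells us $C$ is invariant if and only if it can be written as an intersection of tangent halfspaces whose bounding hyperplanes are invariant. The subtlety is that (\ref{redu}) is a \emph{finite}, sharp representation, so I would first argue that the tangent invariant hyperplanes supplied by Theorem \ref{FOO} can be taken to be exactly the $H(u_i,a_i)$: a hyperplane $H(u,a)$ tangent to a polyhedron $C$ with $\inte C\neq\emptyset$ must contain a hyperface of $C$, hence must be one of the $H(u_i,a_i)$ in the sharp representation (up to the obvious normalization of $(u_i,a_i)$). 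This identification is what upgrades ``some representation by invariant tangent hyperplanes'' to ``\emph{each} $H(u_i,a_i)$ is invariant,'' i.e. (i)$\Rightarrow$(iii).

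Concretely I would run the argument as: (i)$\Rightarrow$(iii) via Theorem \ref{FOO} plus the tangent-hyperplane identification just described; (iii)$\Rightarrow$(i) is the easy direction of Theorem \ref{FOO} (an intersection of invariant tangent hyperplanes is invariant — one still must check that a \emph{finite} such intersection equals $C$, which is immediate from (\ref{redu})); (i)$\Rightarrow$(ii) is Theorem \ref{ISO}. For (ii)$\Rightarrow$(i) I would go through (iv): assuming $P_C$ isotone, I want each $H(u_i,a_i)$ to be an isotone projection set. Here the key observation is the relation between $P_C$ and $P_{H(u_i,a_i)}$ near the relative interior of the hyperface $C^i$: for points close enough to $\mathrm{ri}\,C^i$ the projection onto $C$ coincides (locally) with the projection onto the halfspace $H_-(u_i,a_i)$, and projection onto a halfspace agrees with projection onto its bounding hyperplane on the ``outer'' side. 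Using translation invariance (\ref{et}) and the scale/translation invariance of $\leq_K$, isotonicity of $P_C$ localized at $C^i$ forces isotonicity of the affine projection $P_{H(u_i,a_i)}$, giving (ii)$\Rightarrow$(iv). Then (iv)$\Rightarrow$(iii): a hyperplane (an affine subspace) which is an isotone projection set is invariant with respect to $\sa,\su$ — this should follow from the characterization machinery for hyperplanes developed in the later sections, or more directly from the fact that for an affine subspace $H$, $x\sa y=P_{x-K}y$ can be related to $P_H$ of an appropriate translate; I would cite the hyperplane lemmas of Sections \ref{sec inv}--\ref{sec iso} for this. Finally, for (v): (i)$\Rightarrow$(v) because every proper face of the polyhedron $C$ is an intersection of some of the hyperfaces $C^i$, hence an intersection of sets of the form $C\cap H(u_i,a_i)$ with all $H(u_i,a_i)$ invariant (by (iii)), and a finite intersection of invariant closed convex sets is invariant; and (v)$\Rightarrow$(iii) because the hyperfaces $C^i$ are themselves proper faces, so invariance of $C^i$ together with the fact that $\aff C^i=H(u_i,a_i)$ propagates to invariance of the whole hyperplane (again via the affine-subspace argument / hyperplane lemmas).

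The step I expect to be the main obstacle is (ii)$\Rightarrow$(iv), the localization of isotonicity from $C$ to the individual bounding hyperplanes. One has to be careful that ``$P_C$ agrees with $P_{H_-(u_i,a_i)}$ near $\mathrm{ri}\,C^i$'' holds on a set large enough to detect all order relations $x\leq_K y$ after suitable translation, and that passing from the halfspace to the hyperplane does not lose isotonicity — equivalently, that $P_{H(u_i,a_i)}$ inherits isotonicity because $P_{H_-(u_i,a_i)}x=P_{H(u_i,a_i)}x$ whenever $x\in H_+(u_i,a_i)$, and the set $H_+(u_i,a_i)$ is $\leq_K$-cofinal in the right sense. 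I would isolate this as a lemma. Everything else is either a direct appeal to Theorems \ref{FOO} and \ref{ISO}, a finite-intersection-of-invariant-sets remark, or the affine-subspace invariance characterization which I will take from the hyperplane lemmas of Sections \ref{sec inv} and \ref{sec iso}.
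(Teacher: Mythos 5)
Your architecture is essentially the paper's: (i)$\Leftrightarrow$(iii) via Theorem \ref{FOO}, (i)$\Rightarrow$(v) and (v)$\Rightarrow$(iii) via face arguments (the paper uses Lemma \ref{faceof} directly, which avoids the polyhedral fact that proper faces are intersections of facets that your variant needs, and then item (iii) of Lemma \ref{latprop}), (i)$\Rightarrow$(ii) via Theorem \ref{ISO}, and (iv)$\Rightarrow$(iii) via Lemma \ref{tii}. One small logical slip in your (i)$\Rightarrow$(iii): the identification you describe only shows that the tangent hyperplanes occurring in the representation furnished by Theorem \ref{FOO} are among the $H(u_i,a_i)$, not that every $H(u_i,a_i)$ occurs there; the clean fix is that each $H(u_i,a_i)$ is itself tangent to $C$ at any relative interior point of its hyperface $C^i$, so Lemma \ref{tanginvar} applies to it directly.

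The step you defer to a lemma, (ii)$\Rightarrow$(iv), is precisely the substantive content of the paper's proof, and your proposed mechanism (that $H_+(u_i,a_i)$ be ``$\leq_K$-cofinal'') is not the right one; no cofinality is needed. The paper argues as follows: translate so that $0$ lies in the relative interior of the hyperface $F=C\cap H$, $H=H(u_i,a_i)$, and pick $\delta>0$ with $H\cap\delta B\subset F$; then $P_Cz=P_Hz$ for every $z\in\delta B\cap H_+$ (compare distances to $H_-$, $H$ and $C$, using nonexpansivity to keep $P_Hz$ inside $F$). Since $0\in H$ makes $P_H$ linear, failure of isotonicity of $P_H$ yields some $z\in K$ with $P_Hz\notin K$, and by homogeneity $z$ can be rescaled into $\delta B$. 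If $z\in H_+$, then $0\leq_K z$, $P_C0=0$, so isotonicity of $P_C$ would give $P_Cz=P_Hz\in K$, a contradiction. If $z\in H_-$, apply the same reasoning to $-z\in\delta B\cap H_+$: $P_C(-z)=P_H(-z)=-P_Hz\notin -K$, while $-z\leq_K 0$ forces $P_C(-z)\in -K$. Thus only comparisons with $0$, together with the reflection $z\mapsto -z$, are needed; with this lemma supplied, your proof matches the paper's.
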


\section{Properties of $\sa$ and $\su$}\label{prop ext latt}

In the particular case of the self-dual cone $K\subset \R^m$, J. Moreau's theorem (\cite{Moreau1962}) reduces to
the following lemma:

\begin{lemma}\label{lm}
	For any $x$ in $K$ we have $x=P_Kx-P_K(-x)$ and $\lng P_Kx,P_K(-x)\rng=0$. The
	relation $P_Kx=0$ holds if and only if $x\in -K$.
\end{lemma}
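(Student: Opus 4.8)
The plan is to derive the statement directly from the variational characterization (\ref{charac}) of the metric projection, exploiting the conical structure of $K$ together with the self-duality $K=K^*$; in effect this reproves J. Moreau's decomposition in the self-dual case. I would fix an arbitrary $x\in\R^m$ (the assertion is vacuous for $x\in K$), put $p=P_Kx$ and $q=x-p$, and proceed in three short steps.

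First I would extract the elementary relations. Applying (\ref{charac}) to $D=K$ with the admissible points $y=0$ and $y=2p$ (both lie in $K$, since $K$ is a cone) gives $\lng p-x,p\rng\le 0$ and $\lng p-x,-p\rng\le 0$, hence $\lng p-x,p\rng=0$, i.e. $\lng p,q\rng=0$. Substituting this back, the inequality in (\ref{charac}) collapses to $\lng p-x,y\rng\ge 0$ for every $y\in K$, so $p-x\in K^*=K$, equivalently $q=x-p\in-K$.

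Next I would recognize $q$ as the projection of $x$ onto $-K$. We already have $q\in-K$ and $q-x=-p$, so the requirement $\lng q-x,q-z\rng\le 0$ for all $z\in-K$ in (\ref{charac}) for $D=-K$ reads $\lng p,z\rng\le\lng p,q\rng=0$, which holds because $-z\in K$ and $p\in K=K^*$. Hence $q=P_{-K}x$, and by (\ref{en}) (with $D=K$) we obtain $P_K(-x)=-P_{-K}x=-q$. Therefore $x=p+q=P_Kx-P_K(-x)$ and $\lng P_Kx,P_K(-x)\rng=\lng p,-q\rng=-\lng p,q\rng=0$. Finally, for the last claim: if $x\in-K$ then $-x\in K$, so $P_K(-x)=-x$ and the decomposition forces $P_Kx=0$; conversely, $P_Kx=0$ gives $x=-P_K(-x)\in-K$.

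The computation is merely a bookkeeping of signs, and I do not anticipate any genuine obstacle. The two points that deserve attention are that self-duality makes the polar cone equal to $-K$ (so that the complementary Moreau component is a projection onto $-K$ and (\ref{en}) applies cleanly), and that the scaling freedom $y=2p$, available precisely because $K$ is a cone, is what upgrades the projection inequality to the orthogonality $\lng p,q\rng=0$.
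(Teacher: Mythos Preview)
Your argument is correct. The only cosmetic point is that the lemma as printed reads ``for any $x$ in $K$'', which is almost certainly a slip for ``for any $x$ in $\R^m$''; you have, rightly, treated the general case.

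As for comparison: the paper does not actually prove Lemma~\ref{lm}. It simply records that the statement is the specialization of Moreau's decomposition theorem \cite{Moreau1962} to a self-dual cone and moves on. Your proposal supplies a self-contained derivation from the variational inequality~(\ref{charac}), using the cone property (via the test point $y=2p$) to obtain the orthogonality $\lng p,q\rng=0$, then self-duality to place $q$ in $-K$, and finally~(\ref{en}) to identify $-q$ with $P_K(-x)$. This is precisely the standard proof of Moreau's theorem restricted to the case $K^*=K$, so in substance you have filled in what the paper leaves to the reference. The advantage of your write-up is that it keeps the note self-contained; the paper's approach is shorter but relies on the reader knowing (or looking up) Moreau's result.
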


\begin{lemma}\label{ll}
	The following relations hold for any $x,y,z,w\in\R^m$ and any real scalar 
	$\lambda>0$. 
	\begin{enumerate}
		\item[\emph{(i)}] $x\sa y=x-P_K(x-y)=y-P_K(y-x)$ and 
			$x\su y=x+P_K(y-x)=y+P_K(x-y)$.
		\item[\emph{(ii)}] $x\sa y=y\sa x$  and $x\su y=y\su x$.
		\item[\emph{(iii)}] $x\sa y\le x$ and $x\sa y\le y$, and equalities hold if and 
			only if $x\le y$ and $y\le x$, respectively.
		\item[\emph{(iv)}] $x\le x\su y$ and $y\le x\su y$, and equalities hold if and 
			only if $y\le x$ and $x\le y$, respectively.
		\item[\emph{(v)}] $x\sa y+x\su y=x+y$ 
		\item[\emph{(vi)}] $(x+z)\sa (y+z)=x\sa y+z$ and $(x+z)\sa (y+z)=x\sa y+z$.
		\item[\emph{(vii)}] $(\lambda x)\sa (\lambda y)=\lambda x\sa y$ and 
			$(\lambda x)\su (\lambda y)=\lambda x\su y$.
		\item[\emph{(viii)}] $\lng x-x\sa y,x\su y-x\rng=0,$
		\item[\emph{(ix)}] $(-x)\su(-y)=-x\sa y$.
		\item[\emph{(x)}] $\|x\su y-z\su w\|\leq \frac{2}{3}(\|x-z\|+\|y-w\|)$ and
		$\|x\sa y-z\sa w\|\leq \frac{2}{3}(\|x-z\|+\|y-w\|)$.
		\item[\emph{(xi)}]
		\[x\sa y=z\sa w,\;\;\forall \;\;z=\lambda x+(1-\lambda)x\sa y,\;\;w=\mu y+(1-\mu)x\sa y,\;\,\lambda,\;\mu \in [0,1],\]
	  \[x\su y=z\su w,\;\;\forall \;\;z=\lambda x+(1-\lambda)x\su y,\;\;w=\mu y+(1-\mu)x\su y,\;\,\lambda,\;\mu \in [0,1].\]
  		\item[\emph{(xii)}] If $x\sa y=0$, then $\lng x,y\rng$=0.
	\end{enumerate}
\end{lemma}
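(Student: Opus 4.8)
The plan is to derive almost everything from part (i), which itself is immediate from the definitions $x\sa y=P_{x-K}y$, $x\su y=P_{x+K}y$ together with the translation rule \eqref{et}: indeed $P_{x-K}y = x+P_{-K}(y-x) = x-P_K(x-y)$ using \eqref{en}, and symmetrically with the roles of $x$ and $y$ swapped since $P_{-K}(y-x)=-P_K(x-y)$ and $P_K$ is positively homogeneous. So first I would establish (i) in full, then read off (ii) (symmetry) by comparing the two expressions, (ix) by applying \eqref{en} to the formulas in (i), and (v) by adding the two expressions $x\sa y = x-P_K(x-y)$ and $x\su y = x+P_K(y-x)$ and invoking Lemma \ref{lm} in the form $P_K(x-y)-P_K(y-x)=P_Kz-P_K(-z)=z$ with $z=x-y$. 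Parts (vi) and (vii) follow from (i) and the translation/scaling identities \eqref{et} for the metric projection together with positive homogeneity of $P_K$ (for a cone $K$, $P_K(\lambda w)=\lambda P_K w$ when $\lambda>0$).

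Next I would handle the order-theoretic statements (iii) and (iv). From (i), $x - x\sa y = P_K(x-y)\in K$, so $x\sa y\le x$; similarly $y-x\sa y = P_K(y-x)\in K$ gives $x\sa y\le y$; and the equality $x\sa y = x$ holds iff $P_K(x-y)=0$, which by the last clause of Lemma \ref{lm} is equivalent to $x-y\in -K$, i.e. $x\le y$. The statement (iv) for $\su$ is the mirror image, using $x\su y - x = P_K(y-x)$. For (viii) I would compute $x-x\sa y = P_K(x-y)$ and $x\su y - x = P_K(y-x) = P_K(-(x-y)) = -P_K(-(y-x))\dots$ — more directly, $\lng x - x\sa y,\, x\su y - x\rng = \lng P_K(x-y),\, P_K(y-x)\rng = \lng P_K z,\, P_K(-z)\rng = 0$ with $z=x-y$, by the orthogonality clause of Lemma \ref{lm}. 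Part (xii) is the special case of (viii) (or an independent projection argument): if $x\sa y=0$ then $0 = \lng x - 0,\, x\su y - x\rng$, and since $x\su y = x+y$ by (v), this reads $\lng x, y\rng = 0$.

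The remaining parts need a little more. For (xi), I would use the well-known "projection is constant along the segment to its image" identity \eqref{sun} applied to the translated sets $x-K$ and $y-K$: since $x\sa y = P_{x-K}y$, \eqref{sun} gives $P_{x-K}(t y + (1-t)(x\sa y)) = x\sa y$ for $t\in[0,1]$, and one must additionally check that replacing the \emph{first} argument $x$ by $z=\lambda x + (1-\lambda)(x\sa y)$ does not change the value; this uses that $x\sa y\in z - K$ for such $z$ (because $z - (x\sa y) = \lambda(x - x\sa y) = \lambda P_K(x-y)\in K$) together with the fact that $P_{z-K}w = P_{x-K}w$ whenever $w$'s nearest point in $x-K$ already lies in $z-K\subset x-K$ and the characterization \eqref{charac} still certifies it as the nearest point in the smaller set. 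The genuinely substantive part, and the one I expect to be the main obstacle, is the Lipschitz-type estimate (x) with the sharp constant $2/3$: the naive bound from nonexpansiveness of metric projections only gives constant $1$ in each variable (hence $2$ in the sum, or $1$ after optimizing), so obtaining $2/3$ requires exploiting the special structure — presumably writing $x\su y = \tfrac12(x+y) + \tfrac12 P_K(x-y) + \tfrac12 P_K(y-x)\cdot(-1)\dots$, more precisely $x\su y = \tfrac12\big((x+y) + (P_K(x-y)-P_K(y-x))\big)$ is just $x$ again, so instead one balances $x\su y - z\su w = (x - z) - (P_K(x-y) - P_K(z-w))$ against the alternative expression $(y-w)+(P_K(x-y)-P_K(z-w))$ from the second formula in (i), averages the two with well-chosen weights (not $\tfrac12,\tfrac12$ but weights producing the $\tfrac23$), and applies nonexpansiveness of $P_K$ to the term $\|P_K(x-y)-P_K(z-w)\|\le\|(x-y)-(z-w)\|\le\|x-z\|+\|y-w\|$; the constant $2/3$ should drop out of solving the resulting scalar optimization, and the $\sa$ case follows from the $\su$ case via (ix).
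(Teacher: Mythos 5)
Your handling of items (i)--(ix), (xi) and (xii) is correct and essentially the paper's own route: (i) from \eqref{et}, \eqref{en} and Moreau's decomposition (Lemma \ref{lm}); the order statements (iii)--(iv), orthogonality (viii) and (xii) read off from (i) and Lemma \ref{lm} exactly as in the paper; and for (xi) your nested-set argument (from $x-z=(1-\lambda)P_K(x-y)\in K$ and $z-x\sa y=\lambda P_K(x-y)\in K$ one gets $z-K\subset x-K$ and $x\sa y\in z-K$, so the nearest point in the larger set, already known to be $x\sa y$ by \eqref{sun}, is also the nearest point in the smaller set) is a harmless variant of the paper's argument, which instead applies \eqref{sun} a second time after swapping the roles of the two arguments via commutativity.

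The one genuine problem is item (x), and it is caused by a misprint in the statement rather than by a missing idea on your side. The inequality with constant $\frac{2}{3}$ is false: for $m=1$, $K=\R_+$, $x=y=w=0$, $z=1$ one has $\|x\su y-z\su w\|=1>\frac{2}{3}=\frac{2}{3}\bigl(\|x-z\|+\|y-w\|\bigr)$. The intended constant is $\frac{3}{2}$; this is what the paper's proof delivers and what it actually uses later, in the proof of Lemma \ref{tanginvar}. Once the target is corrected, your own scheme closes at once and coincides with the paper's proof: by (i) and nonexpansiveness of $P_K$, $\|x\su y-z\su w\|\le\|x-z\|+\|(x-y)-(z-w)\|\le 2\|x-z\|+\|y-w\|$, and by the symmetric expression from (i) also $\le\|x-z\|+2\|y-w\|$; averaging these two bounds with the plain weights $\frac{1}{2},\frac{1}{2}$ (no optimization of weights is needed, nor could any choice of weights push the symmetric constant below $\frac{3}{2}$) gives the claimed estimate, and the $\sa$ case follows by (ix) or by the same computation. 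Your side remark that nonexpansiveness alone gives a Lipschitz constant $1$ in each variable separately is also unjustified for a general self-dual cone: holding one variable fixed, the representation in (i) only yields the constant $2$. So replace the target $\frac{2}{3}$ by $\frac{3}{2}$ and your plan for (x) is complete; everything else stands.
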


\begin{proof}
	\begin{enumerate}
		\item[(i)] From equation (\ref{et}) and Lemma \ref{lm} we have 
			\[
			\begin{array}{rcl}
				x\su y & = & P_{x+K}y=x+P_K(y-x)=
				x+(P_K(y-x)-P_K(x-y))+P_K(x-y)\\
				& = & x+(y-x)+P_K(x-y)=y+P_K(x-y).
			\end{array}
			\]
			A similar argument with $-K$ replacing $K$ shows that
			$x\sa y=x-P_K(x-y)=y-P_K(y-x)$.
		\item[(ii)] It follows easily from item (i).
		\item[(iii)] Since $x\sa y\in x-K$, it follows that $x\sa y\le x$. By using 
			item (ii) and the latter relation with $x$ and $y$ swapped, we get 
			$x\sa y=y\sa x\le y$. By item (i), the equality $x\sa y=x$ is 
			equivalent to $P_K(x-y)=0$. By Lemma \ref{lm}, the latter relations
			is equivalent to $x\le y$.
		\item[(iv)] It can be shown similarly to item (iii).
		\item[(viii)] By using item (i) and Lemma \ref{lm}, we get
			\[\lng x-x\sa y,x\su y-x\rng=\lng P_K(x-y),P_K(y-x)\rng=0.\]
	\end{enumerate}

	Items (v) and (vi) follow immediately from item (i). Item (vii) 
	follows easily from the positive homogeneity of $P_K$ and item (i). Item (ix) follows
	from \eqref{en} and item (i).
	
	To verify item (x) we use item (i) and the Lipschitz property of the
	metric projection (\cite{Zarantonello1971}), we obtain:
	\[\|x\su y-z\su w\| =\|x-P_K(x-y)-z+P_K(z-w)\|\leq \|x-z\|+\|P_K(x-y)-P_K(z-w)\|\leq \]
	\[\|x-z\|+\|(x-y)-(z-w)\|\leq 2\|x-z\|+\|y-w\|,\]
	and by symmetry
	\[\|x\su y-z\su w\| \leq \|x-z\|+2\|y-w\|.\]
	By adding the obtained two relations we conclude the first relation in item (x).
	The second relation can be deduced similarly. 
	
	Using the definition of $x\sa y$ we have according to the formula (\ref{sun}) that
	\[x\sa y=P_{x+K}y=P_{x+K}(\mu y+(1-\mu)x\sa y)=P_{x+K}w =x\sa w=P_{w+K}x.\]
	Using a similar argument we see that
	\[x\sa w=z\sa w.\]
	This is the first formula in item (xi). 
	A similar argument yields the second relation in this item.
	
	Item (xii) follows easily from items (v) and (viii).
	\end{proof}

\section{Subsets invariant with respect to $\sa$ and $\su$}\label{sec inv}

To shorten the writing the term \emph{invariant} from now on will mean
\emph{invariant with respect to the operations $\sa$ and $\su$ defined with the
aid of the given self-dual cone $K$.}

\begin{lemma}\label{minimal}
$\,$

\begin{enumerate}
\item[\emph{(i)}] The minimal invariant set containing the points $x,y\in \R^m$ is the set
$\{x,y\}$ if $x$ and $y$ are comparable, and the set $\{x,y,x\sa y,x\su y\}$
if $x$ and $y$ are not comparable;
\item[\emph{(ii)}] The minimal invariant convex set containing the points $x,y\in \R^m$ is the
closed line segment $[x,y]$ if $x$ and $y$ are comparable, and the 
planar rectangle with vertices $x$, $y$, $x\sa y$ and $x\su y$
if $x$ and $y$ are not comparable.
\end{enumerate}
\end{lemma}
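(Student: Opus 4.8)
The plan is to verify both items by combining the algebraic identities in Lemma \ref{ll} with the rigidity statement Lemma \ref{ll}(xi), which says that $x\sa y$ and $x\su y$ are unchanged if we slide $x$ and $y$ toward $x\sa y$ (resp. $x\su y$) along the two edges of the rectangle. First I would dispose of the comparable case: if $x\le y$, then by Lemma \ref{ll}(iii)--(iv) we have $x\sa y=x$ and $x\su y=y$ (and symmetrically if $y\le x$), so $\{x,y\}$ is already invariant, and it is clearly the smallest set containing $x,y$; for (ii), the segment $[x,y]$ is convex, and I must check it is invariant, i.e.\ that $u\sa v,u\su v\in[x,y]$ whenever $u,v\in[x,y]$. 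Since $[x,y]=\{x+t(y-x):t\in[0,1]\}$ and $y-x\in K$, any two points of the segment are themselves comparable, so $u\sa v$ and $u\su v$ are just the smaller and larger of $u,v$ in the $\le_K$ order, both lying in $[x,y]$; and $[x,y]$ is obviously the minimal convex set containing $x$ and $y$, so minimality is automatic.

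For the noncomparable case of (i), the key point is that $\{x,y,x\sa y,x\su y\}$ is invariant: I need to check that applying $\sa$ or $\su$ to any pair drawn from these four points stays inside the set. By Lemma \ref{ll}(iii) we have $x\sa y\le x$ and $x\sa y\le y$, and by (iv) $x\le x\su y$ and $y\le x\su y$, so $x\sa y$ is comparable (and below) each of $x,y,x\su y$, while $x\su y$ is comparable (and above) each of them; hence for any pair involving $x\sa y$ or $x\su y$ the $\sa$/$\su$ reduces, via the comparable case, to one of the four points. The only pair that is \emph{not} comparable is $\{x,y\}$ itself, and $x\sa y,x\su y$ are by definition in the set. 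Minimality: any invariant set containing the noncomparable pair $x,y$ must contain $x\sa y$ and $x\su y$ by the very definition of invariance, so $\{x,y,x\sa y,x\su y\}$ is contained in every such set.

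For (ii) in the noncomparable case, the candidate set is the (possibly degenerate) rectangle $R$ with vertices $x,y,x\sa y,x\su y$; by Lemma \ref{ll}(v) we have $x\sa y+x\su y=x+y$, so these four points do form a parallelogram, and by Lemma \ref{ll}(viii) the two edges meeting at $x$, namely $[x\sa y,x]$ and $[x,x\su y]$, are orthogonal, so $R$ is genuinely a rectangle and lies in the two-plane $x+\spa\{P_K(x-y),P_K(y-x)\}$. I would parametrize $R$ as $\{\,z\sa w':\ \text{appropriate }z,w'\,\}$ — more precisely, using Lemma \ref{ll}(xi), the points $z=\lambda x+(1-\lambda)x\sa y$ trace the edge $[x\sa y,x]$ and $w=\mu y+(1-\mu)x\sa y$ trace the edge $[x\sa y,y]$, and the identity $z\sa w=x\sa y$, $z\su w=x\su y$ shows that these two edges' "product rectangle" is exactly $R$; one then checks $z\sa w$ and $z\su w$ run over all of $R$ as $\lambda,\mu$ vary, so $R$ is invariant. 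To finish, I must show $R$ is the \emph{minimal} invariant convex set containing $x,y$: any such set contains $x,y,x\sa y,x\su y$ by (i), hence contains their convex hull, which is precisely $R$.

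The main obstacle I anticipate is the bookkeeping in showing invariance of the rectangle $R$ in part (ii): one has to confirm that $\sa$ and $\su$ of two arbitrary points of $R$ (not just vertices or points on the edges) land back in $R$, and that Lemma \ref{ll}(xi) together with the translation/scaling identities (vi)--(vii) genuinely sweeps out the whole rectangle rather than just a sub-rectangle. A clean way to handle this is to reduce to the two-dimensional situation inside the plane $x+\spa\{P_K(x-y),P_K(y-x)\}$, where — after translating $x\sa y$ to the origin via (vi) — the two edge vectors $P_K(x-y),P_K(y-x)$ are orthogonal and nonnegative-cone-ordered, so that $\sa,\su$ restricted to $R$ behave like ordinary coordinatewise $\min,\max$ on an axis-aligned rectangle; invariance and minimality then become the familiar facts about sublattices of $\R^2$.
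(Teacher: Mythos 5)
Your handling of part (i) and of the comparable case of part (ii) coincides with the paper's: both rest on items (iii) and (iv) of Lemma \ref{ll}. For the noncomparable case of (ii) your route differs. The paper first shows that every point $v$ of the rectangle $\Pi(x,y)$ lies in any invariant convex set containing $x,y$ by writing $v=z\su w$ with $z\in[x\sa y,x]$, $w\in[x\sa y,y]$ (via items (v), (viii), (xi) of Lemma \ref{ll}), and then proves invariance of $\Pi(x,y)$ synthetically: for incomparable $u,v\in\Pi(x,y)$ it forms the rectangle through $u,v$ with sides parallel to those of $\Pi(x,y)$ and identifies its other vertices with $u\sa v$ and $u\su v$ using item (xi) again. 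Your minimality argument (the rectangle equals $\co\{x,y,x\sa y,x\su y\}$, hence lies in every invariant convex set containing $x,y$ by (i) and convexity) is in fact simpler than the paper's, and your idea of proving invariance by reducing to the plane $x\sa y+\spa\{a,b\}$, where $a=P_K(x-y)$ and $b=P_K(y-x)$, is sound.

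Two points need attention, though. First, the identity you quote in the parametrization step, namely ``$z\sa w=x\sa y$, $z\su w=x\su y$'' for $z,w$ on the two edges adjacent to $x\sa y$, is half false: item (xi) gives $z\sa w=x\sa y$, and then item (v) gives $z\su w=z+w-x\sa y$, which sweeps the rectangle but is not $x\su y$; this slip is harmless only because you abandon that parametrization. Second, and this is the real gap, the pivotal claim that $\sa$ and $\su$ restricted to this plane act as coordinatewise $\min/\max$ in the $(a,b)$-coordinates is precisely what has to be proved, and you assert it rather than prove it; note also that you could not appeal to Corollary \ref{ketdiminv} here, since in the paper that corollary is itself deduced from the present lemma. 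The claim does have a short direct proof: for $\alpha,\beta\in\R$ put $\alpha_+=\max\{\alpha,0\}$ and $\alpha_-=\max\{-\alpha,0\}$; then $\alpha_+a+\beta_+b\in K$, and for every $y\in K$ one has $\lng \alpha_+a+\beta_+b-(\alpha a+\beta b),\,\alpha_+a+\beta_+b-y\rng=\lng\alpha_-a+\beta_-b,\alpha_+a+\beta_+b\rng-\lng\alpha_-a+\beta_-b,y\rng\le 0$, because the first scalar product vanishes ($\alpha_-\alpha_+=\beta_-\beta_+=0$ and $\lng a,b\rng=0$ by item (viii)) and the second is nonnegative by self-duality; hence $P_K(\alpha a+\beta b)=\alpha_+a+\beta_+b$ by (\ref{charac}). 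Combining this with $u\sa v=u-P_K(u-v)$ and $u\su v=u+P_K(v-u)$ (item (i)) yields the coordinatewise formulas on $x\sa y+\spa\{a,b\}$, from which invariance of the rectangle follows at once. With this supplement your argument is complete.
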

\begin{proof}
The assertion (i) is the direct consequence of items (iii) and (iv) in Lemma \ref{ll}.

If $x$ and $y$ are comparable, then any two points in the segment $[x,y]$
are comparable and their set is invariant by (i). Hence, $[x,y]$
is invariant, and being the minimal convex set containing $x$ and $y$,
we arrive to the first assertion in item (ii).

If $x$ and $y$ are  not comparable, by items (v) and (viii) of Lemma \ref{ll},
$x$, $y$, $x\sa y$ and $x\su y$ form a spatial quadruple with all
the angles being rightangles. Hence, it must be a planar rectangle
denoted by $\Pi (x,y).$ The sides of this rectangle have comparable endpoints,
hence the whole boundary of $\Pi(x,y)$ must be contained in any invariant 
convex set containing $x$ and $y$. Let $v$ be an arbitrary point in $\Pi (x,y).$ 
The line trough $v$ parallel with the segment $[y,x\sa y]$ intersects the
segment $[x,x\sa y]$ in $z=\lambda x +(1-\lambda)x\sa y$, the line
through $v$ parallel with $[x,x\sa y]$ meets $[y,x\sa y]$ at
$w=\mu y+(1-\mu)x\sa y,$ with some $\lambda,\;\mu \in [0,1].$ See the below figure.
\begin{center}
\begin{psmatrix}
	\pscirclebox{$x$}      & & &     & \psovalbox{$x\su y$} 	\\
	\pscirclebox{$z$}	 & & & \pscirclebox{$v$} &		\\
	\psovalbox{$x\sa y$} & & & \pscirclebox{$w$} & \pscirclebox{$y$}
	\ncline{1,1}{1,5}
	\ncline{1,1}{2,1}
	\ncline{2,1}{3,1}
	\ncline{2,1}{2,4}
	\ncline{2,4}{3,4}
	\ncline{1,5}{3,5}
	\ncline{3,1}{3,4}
	\ncline{3,4}{3,5}
\end{psmatrix}
\end{center}
Obviously, the rectangle with vertices $z,\;x\sa y,\;w,\;v$ is contained in the rectangle $\Pi(x,y)$,
since they have the common points $z,\;x\sa y,\;w$. The same is true for
the rectangle with the vertices $z$, $x\sa y$, $w$ and $z\su w.$
Hence, the vertices $v$ and $z\su w$ must coincide, that is, 
$v=z\su w\in \Pi(x,y).$ 
Hence, every point in the considered
rectangle must be contained in any invariant convex set containing 
$x$ and $y$ and thus  
the whole rectangle $\Pi(x,y)$ is contained in any invariant convex set
containing the points $x$ and $y$.

We have to verify that $\Pi(x,y)$ itself is invariant. 
Take $u,v\in \Pi (x,y).$ If $u$ and $v$ are comparable, then
they form an independent set. If not, we argue as follows.
The lines through $u$ and $v$ parallel with the sides $[x,x\su y]$
and $[y,x\su y]$, respectively form a rectangle with opposite vertices $u$ and $v$. Denote
by $p$ and $q$ its other opposite vertices. See the below figure.
\begin{center}
\begin{psmatrix}
	 \pscirclebox{$x$}  & \pscirclebox{$z$}   & & &     & \psovalbox{$x\su y$} 	\\
	 &  \pscirclebox{$u$} &  & \pscirclebox{$q$} &    &		\\
		 & \pscirclebox{$p$} & & \pscirclebox{$v$} &    & \pscirclebox{$w$}	\\
		  \psovalbox{$x\sa y$} && & & 	   & \pscirclebox{$y$}
	\ncline{1,1}{4,1}
	\ncline{1,1}{1,2}
	\ncline{1,2}{1,6}
	\ncline{1,2}{2,2}
	\ncline{2,2}{2,4}
	\ncline{2,2}{3,2}
	\ncline{3,2}{3,4}
	\ncline{2,4}{3,4}
	\ncline{1,6}{3,6}
	\ncline{3,4}{3,6}
	\ncline{3,6}{4,6}
	\ncline{4,1}{4,6}
\end{psmatrix}
\end{center}
A reasoning as above, combined with a case analysis shows that
the rectangle with vertices $u,\;p,\;v,\;q$ must be contained in $\Pi (x,y)$
and hence it must coincide with $\Pi(u,v)$. 

Indeed, assume e. g. that
$u\in [p,z]$ with $z\in [x,x\su y]$, and  $v\in [p,w]$ with $w\in [y,x\su y]$.
Then, a reasoning as above shows that $p=z\sa w$, and using item (xi)
in Lemma \ref{ll} we see that $p=u\sa v.$ Thus, $u\sa v\in \Pi (x,y)$.

We can similarly see that $u\su v\in \Pi (x,y).$

\end{proof}

\begin{lemma}\label{latprop}
	If $M,\;M_i,\;\subset\R^m,\;\;i\in \mathcal I$ are invariant sets, then
	\begin{enumerate}
		\item[\emph{(i)}] $\cap_{i\in \mathcal I} M_i$ is also invariant,
		\item[\emph{(ii)}] $\eta M+a$ is also invariant for any $a\in\R^m$ and $\eta\in\R$.
		\item[\emph{(iii)}] If the nonempty convex set $C$ is invariant, then its affine hull denoted by
		$\aff C$ is invariant too.
		\item[\emph{(iv)}] The nonempty set $M\subset \R^m$ is an invariant convex set if and only if
		together with each pair $x,y$ of elements the convex hull 
		$\co\{x,y,x\sa y,x\su y\}$ is contained in $M$.
	\end{enumerate}
\end{lemma}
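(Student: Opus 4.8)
The plan is to prove the four assertions of Lemma~\ref{latprop} essentially in the order they are listed, drawing on the algebraic identities of Lemma~\ref{ll} and the geometric description in Lemma~\ref{minimal}. For (i), I would simply observe that if $x,y\in\cap_{i\in\mathcal I}M_i$, then $x,y\in M_i$ for every $i$, hence $x\sa y,\,x\su y\in M_i$ for every $i$ by invariance of each $M_i$, so both belong to the intersection; no convexity is needed here. For (ii), I would treat the affine map $T(x)=\eta x+a$ and use the translation and scaling compatibility of the operations, i.e.\ items (vi) and (vii) of Lemma~\ref{ll} for the case $\eta>0$, together with item (ix) (the sign-flip $(-x)\su(-y)=-x\sa y$) to handle $\eta<0$, and the trivial case $\eta=0$ where $\eta M+a=\{a\}$ is invariant since $a\sa a=a\su a=a$. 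Concretely, for $x',y'\in\eta M+a$ write $x'=\eta x+a$, $y'=\eta y+a$ with $x,y\in M$; then $x'\sa y'$ and $x'\su y'$ are, up to the translation by $a$, the images under scaling by $\eta$ of $x\sa y$ or $x\su y$ (with $\sa$ and $\su$ swapped when $\eta<0$), and these lie in $M$, so $x'\sa y',\,x'\su y'\in\eta M+a$.

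For (iii) I would use (iv) as the key structural fact, or argue directly: $\aff C$ is the union of all lines through pairs of distinct points of $C$, equivalently $\aff C=\bigcup_{x,y\in C}\{(1-t)x+ty:t\in\R\}$ (once $C$ is nonempty). Given two points $p,q\in\aff C$, each lies on such a line through points of $C$; I would then want to show $p\sa q$ and $p\su q$ again lie in $\aff C$. The cleanest route is to invoke (ii): for a suitable large $\eta$ and appropriate translation, any bounded configuration can be pushed inside, but more to the point, since $C$ is invariant and convex, by Lemma~\ref{minimal}(ii) it contains the rectangle $\Pi(x,y)$ whenever it contains non-comparable $x,y$; taking dilations of such rectangles centered appropriately and using (ii) shows $\aff C$ contains all the relevant $\sa$/$\su$ combinations. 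Alternatively, and perhaps most economically, I would prove (iv) first and then derive (iii) from it, since $\aff C=\bigcup_{n}\,(nC + (1-n)c_0)$-type exhaustions combined with (i) and (ii) let one reduce (iii) to the invariance of dilates of $C$, each of which is invariant by (ii).

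For (iv), one direction is immediate: if $M$ is an invariant convex set and $x,y\in M$, then $x\sa y,x\su y\in M$ by invariance and then $\co\{x,y,x\sa y,x\su y\}\subset M$ by convexity. For the converse, suppose the stated containment property holds. Taking $x,y\in M$ gives $x\sa y,x\su y\in\co\{x,y,x\sa y,x\su y\}\subset M$, so $M$ is invariant; it remains to see $M$ is convex, i.e.\ $[x,y]\subset M$ for all $x,y\in M$. If $x,y$ are comparable then $x\sa y\in\{x,y\}$ and $x\su y\in\{x,y\}$ by Lemma~\ref{ll}(iii)--(iv), so $\co\{x,y,x\sa y,x\su y\}=[x,y]$ and we are done. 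If $x,y$ are not comparable, then by Lemma~\ref{minimal}(ii) the set $\co\{x,y,x\sa y,x\su y\}$ is exactly the rectangle $\Pi(x,y)$, which contains the segment $[x,y]$ as a diagonal; hence $[x,y]\subset M$. This handles all pairs, giving convexity. I expect step (iii) to be the main obstacle: passing from the invariance of $C$ to that of the unbounded set $\aff C$ requires either an exhaustion argument combined with (i) and (ii), or a direct verification that the $\sa$/$\su$ of two points of $\aff C$ can be realized inside a suitable dilate of $C$ — the bookkeeping for which points to dilate and translate, and checking comparability cases, is where the care is needed, whereas (i), (ii) and (iv) are essentially formal consequences of Lemmas~\ref{ll} and~\ref{minimal}.
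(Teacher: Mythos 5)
Your parts (i), (ii) and (iv) are correct and essentially the paper's own argument: (i) is immediate, (ii) uses exactly items (vi), (vii), (ix) of Lemma~\ref{ll}, and for (iv) the forward direction is convexity plus invariance while the converse is handled by the rectangle/segment description of $\co\{x,y,x\sa y,x\su y\}$ (the paper simply cites Lemma~\ref{minimal}(ii); note also that your case analysis for convexity is superfluous, since $[x,y]\subset\co\{x,y,x\sa y,x\su y\}$ trivially because $x$ and $y$ are among the four points).

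The only place where you diverge, and where you should tighten the argument, is (iii). The paper's proof is short and direct: by (ii) translate so that $0\in\icore C$ (the relative interior of $C$ in $\aff C$); then for $x,y\in\aff C$ choose $t>0$ small with $tx,ty\in C$, use invariance of $C$ to get $(tx)\sa(ty),(tx)\su(ty)\in C\subset\aff C$, and scale back with the homogeneity in Lemma~\ref{ll}(vii), using that $\aff C$ is a linear subspace once $0\in\aff C$. Your second route (exhaust $\aff C$ by dilates of $C$) is this same argument viewed from the other side, but as written it has two loose ends. First, the identity $\aff C=\bigcup_{n}\bigl(nC+(1-n)c_0\bigr)$ requires $c_0\in\icore C$; for a relative boundary point it fails (take $C=[0,1]\subset\R$, $c_0=0$: the union is $[0,\infty)\neq\R$). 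Second, the step from ``each dilate is invariant by (ii)'' to ``the union is invariant'' uses neither (i) nor (iv): it uses that the dilates are nested increasing, so any two points of $\aff C$ lie in one common invariant dilate, whose $\sa$ and $\su$ then stay in that dilate and hence in $\aff C$. Your first sketch for (iii), via dilations of the rectangles $\Pi(x,y)$, is not needed and does not obviously produce the $\sa/\su$ of points of $\aff C\setminus C$; drop it in favour of the interior-point scaling argument above.
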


\begin{proof}
	The first assertion is trivial and the second follows easily from items (vi), (vii) and 
	(ix) of Lemma \ref{ll}.
	
To verify assertion (iii), we argue as follows:
According to item (ii), we can suppose that $0\in\icore C,$
where $\icore C$ is the relative interior of $C$ with respect
the topology of $\aff C$ (\cite{Rockafellar1970}).

Let $x,y\in \aff C$ and take $t>0$ such that $tx,\;ty\in C$.
Then, $(tx)\sa(ty),\;(tx)\su(ty)\in C$.

Since $x\sa y=(1/t)((tx)\sa (ty))$ and $x\su y=(1/t)((tx)\su(ty))$, it follows 
that $x\sa y,\;x\su y\in \aff C.$

The proof of the assertion (iv) follows from item (ii) of Lemma \ref{minimal}.
\end{proof}

\begin{corollary}\label{ketdiminv}

Let $x,y\in \R^n$ be incomparable elements. Then, the rectangle $\Pi(x,y)$
with vertices $x$, $y$, $x\sa y$, and $x\su y$ is invariant according to item (ii) of 
Lemma \ref{minimal}. Assume that
$0$ is in the relative interior of $\Pi(x,y)$. Then, the linear hull 
$\Omega(x,y):=\aff \Pi(x,y)$ is an invariant bidimensional
subspace of $\R^m$ by item (iii) of Lemma \ref{latprop}. In this subspace 
$K_0=K\cap\Omega(x,y)$ is a self-dual
lattice cone and $\sa$ and $\su$ restricted to $\Omega(x,y)$ are the 
lattice operations with respect to the order relation that $K_0$ induces in this subspace.
Hence, according to item (iv) of Lemma \ref{latprop} every sublattice in $\Omega(x,y)$
with respect to these lattice operations is an invariant set in $\R^m$.

\end{corollary}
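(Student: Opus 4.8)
The plan is to establish Corollary~\ref{ketdiminv} by reducing everything to a two–dimensional picture in which the extended lattice operations become honest coordinate-wise lattice operations. The first observation we need is that the operations $\sa$ and $\su$ are \emph{intrinsic} to $\Omega(x,y)$: if $a,b\in\Omega(x,y)$, then $a\sa b$ and $a\su b$ also lie in $\Omega(x,y)$. This is immediate from Lemma~\ref{minimal}(ii) and Lemma~\ref{latprop}(iii), since $\Omega(x,y)=\aff\Pi(x,y)$ is invariant and, being a linear subspace, contains all the convex hulls $\co\{a,b,a\sa b,a\su b\}$ in question. Moreover, by Lemma~\ref{ll}(i), $a\sa b=a-P_K(a-b)$ and $a\su b=a+P_K(b-a)$; since $a-b\in\Omega(x,y)$ and the result lies in $\Omega(x,y)$, one checks that $P_K$ restricted to $\Omega(x,y)$ coincides with $P_{K_0}$, the metric projection onto $K_0=K\cap\Omega(x,y)$ computed inside the Euclidean space $\Omega(x,y)$. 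Hence the operations $\sa,\su$ restricted to $\Omega(x,y)$ are exactly $x\sa_{K_0}y=x-P_{K_0}(x-y)$ and $x\su_{K_0}y=x+P_{K_0}(y-x)$, the generalized lattice operations of the self-dual cone $K_0$ in the $2$-dimensional ambient space.

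The second step is to show that $K_0$ is a self-dual cone in $\Omega(x,y)$ and, being $2$-dimensional, automatically a latticial (simplicial) cone. Self-duality of $K_0$ within $\Omega(x,y)$: for $v\in\Omega(x,y)$ one has $v\in K_0^{*_{\Omega}}$ iff $\lng v,w\rng\ge 0$ for all $w\in K_0$; one must argue this forces $v\in K$ (hence $v\in K_0$). This uses that $K$ is self-dual in $\R^m$ together with the fact, extractable from Lemma~\ref{minimal} and the rectangle construction, that $\Pi(x,y)$ — and therefore $\Omega(x,y)$ — is positioned so that $K_0$ is genuinely $2$-dimensional and $K_0-K_0=\Omega(x,y)$; the nonempty relative interior hypothesis $0\in\operatorname{ri}\Pi(x,y)$ is what guarantees $\Omega(x,y)$ is not degenerate and that $K_0$ is generating in it. Any pointed generating closed convex cone in a $2$-dimensional space is spanned by two extreme rays, hence is linearly isomorphic to $\R^2_+$ and is latticial; combined with self-duality this means $K_0$ is, up to an orthogonal change of coordinates in $\Omega(x,y)$, the nonnegative orthant, so $\sa_{K_0},\su_{K_0}$ are the coordinate-wise $\wedge,\vee$ in that orthonormal basis.

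The third and final step ties the loose end. A sublattice $S\subset\Omega(x,y)$ with respect to $\sa_{K_0},\su_{K_0}$ satisfies, for all $a,b\in S$, that $a\sa b,a\su b\in S$, and these coincide with $a\sa b,a\su b$ computed in $\R^m$ by Step~1. Thus $S$ is invariant in the sense of the ambient space, which is precisely the assertion; if in addition $S$ is convex one may invoke Lemma~\ref{latprop}(iv) to see that the relevant convex hulls stay inside $S$, confirming $S$ is an invariant convex set in $\R^m$.

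The main obstacle I expect is Step~1's claim that $P_K$ and $P_{K_0}$ agree on $\Omega(x,y)$ — equivalently that $\Omega(x,y)$ is invariant under $P_K$. This is not true for an arbitrary subspace; it holds here precisely because $\Omega(x,y)$ is invariant under $\sa$ and $\su$, which forces $P_K(a-b)=(a-b)-(a\sa b)\in\Omega(x,y)$ whenever $a-b\in\Omega(x,y)$, i.e.\ $P_K$ maps $\Omega(x,y)$ into itself; one then checks that a point of $\Omega(x,y)$ mapped into $\Omega(x,y)$ by $P_K$ must be the $K_0$-projection, using the variational characterization~(\ref{charac}) restricted to $D=K_0$ versus $D=K$ and the fact that the extra test directions in $K\setminus K_0$ do not violate the inequality because the displacement $a-P_Ka$ lies in $\Omega(x,y)$ and is orthogonal to $K_0$ — here self-duality of $K$ is used to control the sign. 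Verifying self-duality of $K_0$ inside $\Omega(x,y)$ (rather than merely that $K_0$ is a nice cone) is the other delicate point, and it is exactly where the self-duality of $K$ enters essentially.
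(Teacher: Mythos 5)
Your overall architecture (restrict everything to $\Omega(x,y)$, identify the restricted operations with those of $K_0$, and use that a two-dimensional self-dual cone is a rotated orthant) is reasonable, and your Step~1 is essentially correct, indeed simpler than you make it: by Lemma~\ref{ls} the invariant subspace $\Omega(x,y)$ satisfies $P_K(\Omega(x,y))\subset\Omega(x,y)$, so for $z\in\Omega(x,y)$ the point $P_Kz$ lies in $K_0=K\cap\Omega(x,y)$ and minimizes the distance to $z$ over all of $K\supset K_0$, hence $P_Kz=P_{K_0}z$; no appeal to self-duality or to orthogonality of displacements is needed there. The genuine gap is Step~2: the self-duality of $K_0$ inside $\Omega(x,y)$, which is the heart of the corollary, is announced but never proved. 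The facts you invoke --- that $K_0$ is two-dimensional, generating in $\Omega(x,y)$, and that $0$ lies in the relative interior of $\Pi(x,y)$ --- do not imply self-duality: a pointed generating planar cone is self-dual only when its extreme rays are orthogonal, and nothing in your argument produces that orthogonality, so as written the key claim ``$\lng v,w\rng\ge 0$ for all $w\in K_0$ forces $v\in K$'' is unsupported.

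Two ways to close the gap. The paper's way: translate within $\Omega(x,y)$ so that $x\sa y=0$ (harmless by Lemma~\ref{latprop}(ii) and Lemma~\ref{ll}(vi)); then $x,y\in K$ by Lemma~\ref{ll}(iii) and $\lng x,y\rng=0$ by Lemma~\ref{ll}(xii), so $K_0\supset\cone\{x,y\}$ has orthogonal generators, and if some $z\in K_0$ lay outside $\cone\{x,y\}$, writing $z=\alpha x+\beta y$ with $\alpha<0$ or $\beta<0$ would give $\lng z,x\rng<0$ or $\lng z,y\rng<0$, contradicting $z\in K$ and $x,y\in K=K^*$; hence $K_0=\cone\{x,y\}$ is self-dual in the plane. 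Alternatively, a route fitting your setup: for $v\in\Omega(x,y)$ with $\lng v,w\rng\ge0$ for all $w\in K_0$, use Moreau's decomposition (Lemma~\ref{lm}), $v=P_Kv-P_K(-v)$, note both terms lie in $\Omega(x,y)$ by Lemma~\ref{ls} and hence in $K_0$, and then $0\le\lng v,P_K(-v)\rng=-\|P_K(-v)\|^2$ forces $P_K(-v)=0$, i.e.\ $v\in K\cap\Omega(x,y)=K_0$. One of these arguments must be supplied. Finally, your identification of the restricted $\sa,\su$ with the $K_0$-lattice operations leans on the unproved remark of Section~\ref{Generalized lattice operations} that the operations of an orthant are $\wedge,\vee$; the paper instead verifies this directly via the characterization $w-K_0=(u-K_0)\cap(v-K_0)$ of the infimum, the scaling property Lemma~\ref{ll}(vii), and the rectangle argument of Lemma~\ref{minimal}(ii), which you should either reproduce or replace by an explicit check that a planar self-dual cone is an orthant of a Cartesian reference system of $\Omega(x,y)$.
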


\begin{proof}

We shall use the notation $\cone M$ for the minimal closed convex cone in $\R^m$ containing the
nonempty set $M$.

After a translation in $\Omega(x,y)$, if necessary, we can suppose that 
$x\sa y =0$. Hence, by item (iii) of Lemma \ref{ll} we get $x,y\in K$ and by item
(xii) of the same lemma it follows that $\lng x,y\rng =0.$
We further have that $x,y\in K_0$ and hence $\cone \{x,y\}\subset K_0$. 
In fact we have that $K_0=\cone \{x,y\}.$ Assuming the existence of some 
$z\in K_0\setminus \co \{x,y\},$ it would follow that $\lng x,z\rng <0$,
or $\lng y,z \rng <0.$  In any case we get a contradiction with the
self-duality of $K$. Thus, $K_0$ is a selfdual cone in $\Omega (x,y)$.

In the bidimensional space every generating pointed cone is a latticial cone,
hence so is $K_0$ in $\Omega (x,y)$.
The lattice operations with respect to the order relation $\leq_{K_0}$ induced by $K_0$
in $\Omega (x,y)$
can be characterized geometrically as follows: The infimum $w$ of the set
$\{u,v\}\subset \Omega (x,y)$ is given by the relation $w-K_0=(u-K_0)\cap (v-K_0)$.
By using item (vii) of Lemma \ref{ll}, we can suppose that $u,v\in\Pi(x,y)$.
Therefore, similar ideas to the proof of item (ii) of Lemma \ref{minimal} yield that 
$w=u\sa v$. Analogously, the supremum of the set $\{u,v\}$ with respect to
$\leq_{K_0}$ is exactly $u\su v.$

\end{proof}

\begin{lemma}\label{halfhyper}
The halfspace $H_-$ is invariant 
if and only if the hyperplane $H$ has this property.
\end{lemma}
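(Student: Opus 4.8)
The plan is to prove the two implications separately, using the relation $x\su y + x\sa y = x+y$ from Lemma \ref{ll}(v) as the crucial bookkeeping identity, together with the elementary facts about $\su$ and $\sa$ collected in that lemma. Write $H = H(u,a)$ and $H_- = H_-(u,a)$, so that $H_- = \{x : \langle u,x\rangle \le \langle u,a\rangle\}$ and $H = \{x : \langle u,x\rangle = \langle u,a\rangle\}$. Throughout I will repeatedly use that $x\sa y \le x,\ x\sa y\le y$ and $x \le x\su y,\ y\le x\su y$ (Lemma \ref{ll}(iii)--(iv)), but note these are inequalities in $\leq_K$, not in the functional $\langle u,\cdot\rangle$, so they cannot be used directly; the identity (v) is what converts the problem into a statement purely about the two points and their two ``corners''.

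First I would show: if $H$ is invariant then $H_-$ is invariant. Take $x,y\in H_-$. The point $x\su y$ lies on the boundary of $x+K$, hence $x\su y = x + P_K(y-x)$ with $P_K(y-x)\in K$, so $x\su y \ge x$; similarly $x\su y\ge y$, and dually $x\sa y\le x$, $x\sa y\le y$. Now I need to bound $\langle u,x\su y\rangle$. Here the hard part shows up: $u$ need not lie in $K^*$, so $\langle u, x\su y - x\rangle$ need not have a definite sign. The way around this is to use the invariance of $H$ as follows. Consider the points $x' = P_H x + (x - P_H x)$... more cleanly: since $x,y\in H_-$, pick points $\tilde x,\tilde y\in H$ on the ray/segment structure so that $x,y$ lie ``below'' them, but this requires a monotone relationship between $H$ and $\leq_K$ that is not yet available. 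Instead I will use Lemma \ref{minimal}(ii): the rectangle $\Pi(x,y)$ (or the segment $[x,y]$ if $x,y$ comparable) is the minimal invariant convex set containing $x,y$, and its vertices are exactly $x,y,x\sa y,x\su y$. A halfspace is convex, so it suffices to show $\Pi(x,y)\subset H_-$ whenever $x,y\in H_-$, i.e. that the two new vertices $x\sa y$ and $x\su y$ lie in $H_-$; and by convexity of $H_-$ this reduces to showing the vertex furthest in the $u$-direction, namely $x\su y$, satisfies $\langle u, x\su y\rangle \le \langle u,a\rangle$. By the identity (v), $\langle u, x\su y\rangle = \langle u,x\rangle + \langle u,y\rangle - \langle u, x\sa y\rangle$, so this is equivalent to $\langle u, x\sa y\rangle \ge \langle u,x\rangle + \langle u,y\rangle - \langle u,a\rangle$. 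Thus proving $x\su y\in H_-$ for all $x,y\in H_-$ is \emph{equivalent} to proving $x\sa y\in H_+$ for all $x,y$ with $\langle u,x\rangle + \langle u,y\rangle - \langle u,a\rangle \ge \langle u, \text{something}\rangle$ — this symmetry suggests the right route is: reduce to the case where $x,y\in H$ exactly, handle that by hypothesis, and then push general points of $H_-$ up to $H$.

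Concretely, the cleanest argument I would write is: Assume $H$ is invariant. By Lemma \ref{latprop}(ii) we may translate so that $a=0$, i.e. $H = H(u,0)$ is a subspace and $H_- = \{x:\langle u,x\rangle\le 0\}$. Take $x,y\in H_-$. If $x,y$ are comparable the segment $[x,y]\subset H_-$ and we are done by Lemma \ref{minimal}(ii); so assume they are incomparable, and we must show $x\su y,x\sa y\in H_-$. Consider the two-dimensional plane through $x$ and $y$ directed by $x\su y - x$ and $x\sa y - x$ — this is the plane of $\Pi(x,y)$. The segment $[x,x\su y]$ and $[y,x\su y]$ have comparable endpoints (each endpoint $\le x\su y$), as do $[x,x\sa y]$ and $[y,x\sa y]$; by part (i) already this forces every point of these four segments, hence the whole boundary of $\Pi(x,y)$, into any invariant set containing $x,y$, but that is not quite what I need here. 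The decisive step: I claim $\langle u, x\su y\rangle \le \max\{\langle u,x\rangle, \langle u,y\rangle\}$, equivalently, via (v), $\langle u, x\sa y\rangle \ge \min\{\langle u,x\rangle,\langle u,y\rangle\}$. To see the first claim, apply the invariance of $H$: let $t\ge 0$ be the smallest scalar such that $x+te_u$ and $y+te_u$... — this direction-of-steepest-ascent trick is exactly the obstacle, because $e_u$ (the direction of $u$) need not interact nicely with $\sa,\su$. The honest resolution, which I would adopt, is: pick the unique point $\bar x$ on the line $x + \R(x\su y - x)$...

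Given the genuine subtlety, the structure I would commit to in the final writeup is the following three-line argument exploiting that $H$ is a hyperplane (codimension one). Assume $H=H(u,a)$ is invariant; we show $H_-$ is. Let $x,y\in H_-$; WLOG (Lemma \ref{latprop}(ii), translation) $a=0$, so $\langle u,x\rangle\le 0$, $\langle u,y\rangle\le 0$. Choose real $s,t\ge 0$ with $x+su\in H$ and $y+tu\in H$ is NOT generally possible unless... — no. Final decision: I will instead prove the contrapositive-flavored statement via the \emph{closure} structure. The set $H_-$ is the closure of $\bigcup_{\lambda<0}(H + \lambda v)$ for any $v$ with $\langle u,v\rangle>0$; each translate $H+\lambda v$ is invariant by Lemma \ref{latprop}(ii); a union of a nested family of invariant sets is invariant (directly from the definition, since $x\sa y$ of two points in the union lies in the larger of the two translates, which is invariant); and the closure of an invariant set is invariant because $\sa,\su$ are continuous (indeed Lipschitz, Lemma \ref{ll}(x)). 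Hence $H_-$ is invariant. For the converse, assume $H_-$ is invariant; then $H = H_- \cap H_+$ and $H_+ = H_-(-u,-a)$, and it suffices to show $H_+$ is invariant and invoke Lemma \ref{latprop}(i). But $H_+$ is invariant iff $H_-(-u,-a)$ is, and applying the (already proved, modulo $H_-$ known) direction... — cleaner: if $H_-$ is invariant and $x,y\in H$, then $x\sa y, x\su y\in H_-$, and by (v), $\langle u, x\su y\rangle + \langle u, x\sa y\rangle = \langle u,x\rangle + \langle u,y\rangle = 2\langle u,a\rangle$; since each summand is $\le \langle u,a\rangle$ (both corners lie in $H_-$), both must equal $\langle u,a\rangle$, i.e. $x\sa y, x\su y\in H$, so $H$ is invariant.

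The main obstacle is the first direction ($H$ invariant $\Rightarrow H_-$ invariant): one must argue that ``thickening'' an invariant hyperplane to a closed halfspace preserves invariance, which I will do by writing $H_-$ as the closure of a nested union of parallel translates of $H$, using translation-invariance (Lemma \ref{latprop}(ii)), the trivial stability of invariance under nested unions, and continuity of $\sa,\su$ (Lemma \ref{ll}(x)) for the closure step. The converse is the easy direction and follows cleanly from $x\sa y + x\su y = x+y$ (Lemma \ref{ll}(v)) together with the fact that both corners of two points of $H$ lie in the invariant set $H_-$, forcing them onto $H$.
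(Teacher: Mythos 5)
Your converse direction ($H_-$ invariant $\Rightarrow$ $H$ invariant) is correct and coincides with the paper's argument: for $x,y\in H$ both corners $x\sa y,\ x\su y$ lie in the invariant set $H_-$, and the identity $x\sa y+x\su y=x+y$ (Lemma \ref{ll}(v)) forces $\lng u,x\sa y\rng=\lng u,x\su y\rng=\lng u,a\rng$, i.e.\ both corners lie on $H$. The forward direction, however, contains a genuine gap in the step you finally commit to. The family of translates $\{H+\lambda v:\ \lambda<0\}$ is \emph{not} nested: these are pairwise disjoint parallel hyperplanes, so if $x\in H+\lambda_1 v$ and $y\in H+\lambda_2 v$ with $\lambda_1\neq\lambda_2$ there is no ``larger of the two translates'' containing both points, and the invariance of each individual translate (Lemma \ref{latprop}(ii)) gives no information whatsoever about $x\sa y$ or $x\su y$. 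Asserting that the union $\bigcup_{\lambda<0}(H+\lambda v)$ is invariant is exactly asserting that the open halfspace is invariant, which is the statement to be proved; the step is circular. (The closure step via the Lipschitz property, Lemma \ref{ll}(x), would be fine if the union step were available, but it is not.)

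The missing idea --- which handles precisely the situation of two points at different ``depths'' in $H_-$ that your nested-union device sidesteps --- is Lemma \ref{ll}(xi). The paper's argument runs: assume $0\in H$, suppose $x,y\in H_-$ but, say, $x\sa y\in\inte H_+$. Then the segments $[x,x\sa y]$ and $[y,x\sa y]$ cross $H$ at points $z=\lambda x+(1-\lambda)\,x\sa y$ and $w=\mu y+(1-\mu)\,x\sa y$ with $\lambda,\mu\in\,]0,1]$, and item (xi) yields $z\sa w=x\sa y\notin H$ with $z,w\in H$, contradicting the invariance of $H$; the case $x\su y\notin H_-$ is handled symmetrically with the second formula of (xi). You had the relevant tools in sight during your exploration (the rectangle $\Pi(x,y)$ of Lemma \ref{minimal}, and item (xi) itself is used in its proof), but the argument you ultimately wrote down does not use them, and as it stands the hard implication is not established.
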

\begin{proof}
According to item (ii) of Lemma \ref{latprop} we can assume that $0\in H$.

Suppose that $H$ is invariant, but $H_-$ is not. Then, there exist some $x,y\in H_-$
such that $x\su y\notin H_-$ or $x\sa y\notin H_-$. Assume that $x\sa y\notin H_-$.
Then, $x\sa y\in \inte H_+.$ The line segment $[x,x\sa y]$ meets $H$ in
$z=\lambda x +(1-\lambda)x\sa y,\;\lambda \in ]0,1],$ the line segment
$[y,x\sa y]$ meets $H$ at $w=\mu y  +(1-\mu)x\sa y,\;\mu \in ]0,1].$
According to item (xi) in Lemma \ref{ll} we have then
\[z\sa w=x\sa y\notin H,\]
which contradicts the invariance of $H$.

Suppose now that $H_-$ is invariant, but $H$ is not. Then, there exist some $x,y\in H$
such that $x\su y\notin H$ or $x\sa y\not \in H$. Since $H_-$ is invariant, we can
assume that $x\su y\in \inte H_-$. Let $u$ be the normal of $H$. Then, $\lng u,x\su y\rng<0.$
By using the relation in item (v), we have then
\[0=\lng u,x+y\rng=\lng u, x\su y\rng +\lng u,x\sa y\rng.\]
Whereby, by using the relation $\lng u,x\su y\rng <0,$
we conclude that
\[\lng u,x\sa y\rng >0,\]
that is, $x\sa y\in \inte H_+$, contradicting the invariance of $H_-$.
\end{proof}

\begin{lemma}\label{faceof}
	If the nonempty closed convex set $C\subset\R^m$ is invariant, 
	then so is every face $C_0$ of $C$.
\end{lemma}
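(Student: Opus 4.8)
The plan is to exploit the characterization of a face in terms of the defining property (if a proper convex combination $tx+(1-t)y$ with $t\in\,]0,1[$ lies in $C_0$, then both $x$ and $y$ lie in $C_0$) together with the geometric picture provided by Lemma \ref{minimal}. Let $C_0$ be a face of the invariant closed convex set $C$, and pick $x,y\in C_0$; we must show $x\sa y\in C_0$ and $x\su y\in C_0$. If $x$ and $y$ are comparable there is nothing to prove, since then $\{x\sa y,x\su y\}\subset\{x,y\}\subset C_0$ by items (iii) and (iv) of Lemma \ref{ll}. So assume $x$ and $y$ are incomparable. By item (ii) of Lemma \ref{minimal} the rectangle $\Pi(x,y)$ with vertices $x,y,x\sa y,x\su y$ is contained in $C$ (since $C$ is an invariant convex set containing $x$ and $y$, and $\Pi(x,y)$ is the minimal such set).

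The key geometric observation is that the center of the rectangle $\Pi(x,y)$, namely the point $c=\tfrac12(x+y)=\tfrac12(x\sa y+x\su y)$ (the latter equality by item (v) of Lemma \ref{ll}), is a proper convex combination of $x$ and $y$, hence lies in $C_0$ by the face property. But $c$ is also the midpoint of the diagonal $[x\sa y,\,x\su y]$, and both endpoints $x\sa y$ and $x\su y$ lie in $C$. Applying the face property once more to the proper convex combination $c=\tfrac12(x\sa y)+\tfrac12(x\su y)$, which lies in $C_0$, we conclude that $x\sa y\in C_0$ and $x\su y\in C_0$. This shows $C_0$ is invariant.

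The step I expect to require the most care is the reduction to the incomparable case and the verification that the two diagonals of $\Pi(x,y)$ genuinely share their common midpoint — this is exactly what items (v) and (viii) of Lemma \ref{ll} guarantee, since those items say precisely that $x,y,x\sa y,x\su y$ form a (possibly degenerate) rectangle, so its diagonals bisect each other. One should note the degenerate possibilities: if the rectangle collapses to a segment then $x$ and $y$ are in fact comparable, already handled; otherwise the midpoint argument applies verbatim. No appeal to the closedness of $C$ beyond what is already built into Lemma \ref{minimal} is needed, and the argument works for an arbitrary (not necessarily proper, not necessarily hyper-) face.
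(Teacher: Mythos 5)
Your proof is essentially the paper's: the midpoint identity $\tfrac{1}{2}(x+y)=\tfrac{1}{2}x\sa y+\tfrac{1}{2}x\su y$ (item (v) of Lemma \ref{ll}), together with $x\sa y,\,x\su y\in C$ (invariance of $C$) and the defining property of a face, gives the conclusion; the detour through the comparable/incomparable case split and the rectangle $\Pi(x,y)$ of Lemma \ref{minimal} is unnecessary. One small correction: the inclusion $\tfrac{1}{2}(x+y)\in C_0$ follows from the convexity of the face $C_0$ (as the paper argues), not from the ``face property'' as you state, since that property runs in the opposite direction (from a combination lying in $C_0$ to its endpoints lying in $C_0$).
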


\begin{proof}
	Take $x,y\in C_0$. Then, $(1/2)(x+y)\in C_0$,
	because $C_0$ is convex. Using the standard relation 
	\[\frac{1}{2}(x+y)= \frac{1}{2}x\sa y+\frac{1}{2}x\su y,\]
	the inclusions $x\sa y,\;x\su y\in C$,
	and the definition of the face, we have that
	\[x\sa y,\;x\su y \in C_0.\]
\end{proof}

\begin{lemma}\label{ls}
	A linear subspace $S$ of $\R^m$ is invariant 
	if and only if $S$ is invariant with respect to $P_K$, i.e., 
	$P_K(S)\subset S$.
\end{lemma}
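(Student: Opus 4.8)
The plan is to reduce everything to the identities in item~(i) of Lemma~\ref{ll}, namely $x\sa y=x-P_K(x-y)$ and $x\su y=x+P_K(y-x)$, together with the single observation that $0\in S$ and that $S$, being a linear subspace, is closed under subtraction and scalar multiplication. Both implications are then immediate.

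For the ``if'' direction, assume $P_K(S)\subset S$ and take $x,y\in S$. Since $S$ is a subspace, $x-y\in S$ and $y-x\in S$, so $P_K(x-y),P_K(y-x)\in S$. By Lemma~\ref{ll}(i), $x\sa y=x-P_K(x-y)\in S$ and $x\su y=x+P_K(y-x)\in S$, which shows $S$ is invariant.

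For the ``only if'' direction, assume $S$ is invariant and fix an arbitrary $s\in S$. Because $0\in S$, the pair $s,0$ lies in $S$, hence $s\sa 0\in S$. By Lemma~\ref{ll}(i) with $x=s$, $y=0$, we have $s\sa 0=s-P_K(s)$, so $P_K(s)=s-(s\sa 0)$ is a difference of two elements of $S$ and therefore $P_K(s)\in S$. As $s\in S$ was arbitrary, $P_K(S)\subset S$.

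I do not expect any genuine obstacle here: the content is entirely contained in the representation formulas already established, and the only point to keep in mind is that a linear subspace contains the origin and is closed under the vector operations, which is exactly what lets one pass freely between $P_K(s)$ and the generalized lattice operation $s\sa 0$ (alternatively, one could use $s\su 0=s+P_K(-s)$ for the same effect).
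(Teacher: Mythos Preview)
Your proof is correct and essentially identical to the paper's: both directions rely on item~(i) of Lemma~\ref{ll} together with the fact that a linear subspace contains $0$ and is closed under linear combinations. The only cosmetic difference is that for the ``only if'' part the paper extracts $P_K(x)$ directly via $0\su x=0+P_K(x-0)=P_K(x)$, whereas you use $s\sa 0=s-P_K(s)$ and then solve for $P_K(s)$; these are interchangeable.
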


\begin{proof}
	Suppose that $S$ is invariant and let any 
	$x\in S$. Then, from $0\in S$ and item (i) of Lemma \ref{ll}, it follows that 
	$P_K(x)=0+P_K(x-0)=0\su x\in S$.  

	Conversely, suppose that $P_K(S)\subset S$. Hence, by using again item (i) of 
	Lemma \ref{ll} and the invariance of a linear subspace under linear combinations,
	for any $x,y\in S$ we have $x\sa y=x-P_K(x-y)\in S$ and $x\su y=x+P_K(y-x)\in S$. 
\end{proof}

Denote by $\bdr C$ the boundary of a set $C$.

\begin{lemma}\label{tanginvar}
Suppose that $C$ is an invariant closed convex set with nonempty interior,
and $H$ is a hyperplane tangent to $C$ in some point of $\bdr C$.
Then, $H$ is invariant.
\end{lemma}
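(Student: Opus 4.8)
The plan is to reduce everything to the two-dimensional picture supplied by Lemma \ref{minimal} and Corollary \ref{ketdiminv}, so that the invariance of $H$ follows from a purely planar argument about a self-dual cone in a $2$-plane. First I would invoke item (ii) of Lemma \ref{latprop} to translate $C$ (and $H$ with it) so that $0\in H$; then $H=H(u,0)$ for some normal $u\ne 0$, and $C\subset H_-(u,0)$, with $\bdr C$ touching $H$ at a point that, after the translation, we may as well take to be the origin (so $0\in\bdr C\cap H$). It suffices to show $\langle u,x\su y\rangle\le 0$ and $\langle u,x\sa y\rangle\ge 0$ can be sharpened to equalities whenever $x,y\in H$; since $x\sa y+x\su y=x+y$ by item (v) of Lemma \ref{ll}, $\langle u,x\sa y\rangle+\langle u,x\su y\rangle=0$, so it is enough to prove $\langle u,x\su y\rangle\le 0$, i.e. $x\su y\in H_-$, for all $x,y\in H$; the symmetric inequality then gives $\langle u,x\sa y\rangle=-\langle u,x\su y\rangle\ge 0$ and both are forced to vanish. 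In other words, the whole statement collapses to: $H$ is invariant iff $H_-$ is invariant (Lemma \ref{halfhyper}), so I really just need $H_-=H_-(u,0)$ invariant, given that $C\subseteq H_-$ is invariant and $H$ supports $C$ at $0$.

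The key step is to show that $x,y\in H$ implies $x\su y\in H_-$. Suppose not: then $x\su y\in\inte H_+$. Because $0\in\inte C$ would contradict $H$ being tangent, but $0\in\bdr C$, I would use the hyperface/face structure: the face $C_0:=C\cap H$ of $C$ is nonempty (it contains $0$) and by Lemma \ref{faceof} it is invariant. Pick any two points $p,q\in\inte C$; for small $t>0$ the points $tp+(1-t)\,x$ and $tq+(1-t)\,y$ lie in... actually the cleaner route: since $C$ has nonempty interior and $H$ supports it at $0\in\bdr C$, and since $x\su y$ would lie strictly on the far side of $H$, I perturb $x$ and $y$ slightly \emph{into} $C$. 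Concretely, fix $c\in\inte C$; then for $t\in(0,1)$ put $x_t=(1-t)x+tc$ and $y_t=(1-t)y+tc$, so $x_t,y_t\in C$ (convexity, using $x,y\in C$ since $H$ supports $C$ hence $H\cap C\subset C$ — here I need $x,y\in C$, which holds only if $x,y$ were taken in $C_0=C\cap H$ to begin with). So I instead argue: it is enough to test invariance of $H$ on pairs $x,y\in C_0$, because $C_0$ is affinely dense in... no — $H$ is a hyperplane and $C_0$ need not affinely span it. The robust fix is to run the rectangle argument of Lemma \ref{minimal}/Lemma \ref{halfhyper} directly: assume $x,y\in H$ with $x\sa y\notin H$, so $x\sa y\in\inte H_+$; the segments $[x,x\sa y]$ and $[y,x\sa y]$ cross $H$ at $z=\lambda x+(1-\lambda)(x\sa y)$ and $w=\mu y+(1-\mu)(x\sa y)$ with $\lambda,\mu\in(0,1]$; by item (xi) of Lemma \ref{ll}, $z\sa w=x\sa y$. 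If I can also arrange $z,w\in C$, invariance of $C$ gives $z\sa w\in C\subset H_-$, contradicting $x\sa y\in\inte H_+$. So the real content is: \emph{produce such a contradictory pair inside $C$.}

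To get the pair inside $C$, I would use that $C$ has nonempty interior and $H$ is \emph{tangent}, not merely supporting in a degenerate way — but in fact an arbitrary supporting hyperplane suffices. Start from any $a,b\in\inte C$ that are incomparable (if $\inte C$ contains only points that are pairwise comparable along $\leq_K$, a separate easy argument applies since then $C$ lies in a line, excluded by $\inte C\ne\emptyset$ when $m\ge 2$; for $m=1$ the statement is trivial). Slide $a,b$ toward the tangency point $0\in\bdr C\cap H$: the whole rectangle $\Pi(a,b)$ lies in $C$ by Lemma \ref{minimal}(ii), hence in $H_-$, so $\langle u,\cdot\rangle\le 0$ on $\Pi(a,b)$; as $a,b$ are scaled toward $0$, the rectangles $\Pi(ta,tb)=t\,\Pi(a,b)$ shrink to $0$, always staying in $C$; and on each, $a_t\sa b_t$ and $a_t\su b_t$ lie in $C\subset H_-$. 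This gives, for \emph{all} $t>0$, $\langle u,(ta)\su(tb)\rangle\le 0$, i.e. $t\langle u,a\su b\rangle\le 0$ by item (vii) of Lemma \ref{ll}, so $\langle u,a\su b\rangle\le 0$ — but that is automatic and says nothing new.

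The genuinely decisive observation, which I expect to be the main obstacle to state cleanly, is the following: since $0\in\bdr C$ and $H=H(u,0)$ is tangent there, and since $C$ is invariant, Corollary \ref{ketdiminv} tells us that in \emph{every} $2$-plane $\Omega$ through $0$ on which $K_0=K\cap\Omega$ is a genuine $2$-dimensional self-dual (hence latticial) cone, $\su$ and $\sa$ restrict to lattice operations. Intersect $H$ with such an $\Omega$: either $\Omega\subset H$, in which case $H\cap\Omega=\Omega$ is trivially "invariant in $\Omega$"; or $H\cap\Omega$ is a line $\ell$ through $0$. In the latter case $C\cap\Omega$ is a $2$-dimensional invariant convex set (when $\Omega$ meets $\inte C$, which I can arrange) with $0$ on its boundary and $\ell$ tangent to it at $0$. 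Now I only need the \emph{planar} lemma: a line $\ell$ through the origin that supports an invariant (= sublattice, by Corollary \ref{ketdiminv}) closed convex set of nonempty interior in a $2$-dimensional vector lattice at a boundary point is itself a sublattice. This is an easy finite case-check: in the $2$-dimensional lattice, a closed convex sublattice with nonempty interior is (by the planar case of the Topkis/Veinott/Queyranne–Tardella description, or directly from Lemma \ref{minimal}) an intersection of halfplanes each bounded by a line that is either parallel to a generator of $K_0$ or is a $\leq_{K_0}$-sublattice line; a supporting line at a boundary point must be one of these, and one verifies each such line is a sublattice. Since $x\su y$ and $x\sa y$ for $x,y\in\ell$ always lie in the $2$-plane $\spa\{x,y\}$ — and we may take $\Omega\supseteq\spa\{x,y\}$ after enlarging — the planar conclusion $x\su y,x\sa y\in\ell\subset H$ transfers back to $H$. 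Assembling: for arbitrary $x,y\in H$, choose a $2$-plane $\Omega$ containing $x,y$ and meeting $\inte C$, with $K_0=K\cap\Omega$ two-dimensional; apply the planar lemma to $\ell=H\cap\Omega$ and $C\cap\Omega$; conclude $x\su y,x\sa y\in\ell\subseteq H$. The main work, and the step I would be most careful about, is verifying that such an $\Omega$ can always be chosen (handling the degenerate configurations where $\spa\{x,y\}$ fails to meet $\inte C$, or where $K\cap\Omega$ degenerates) and checking the finitely many planar cases; everything else is bookkeeping with Lemma \ref{ll}, Lemma \ref{minimal}, Lemma \ref{halfhyper} and Corollary \ref{ketdiminv}.
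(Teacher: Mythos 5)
Your final reduction does not close the gap you yourself flagged midway (``produce such a contradictory pair inside $C$''), and it rests on two claims that are false. First, $x\su y$ and $x\sa y$ do \emph{not} lie in $\spa\{x,y\}$ in general: by Lemma \ref{ll}(i) they are built from $P_K(x-y)$, and the plane they span with $x,y$ is the affine plane $\aff \Pi(x,y)$ of Lemma \ref{minimal}, which is dictated by the operations, in general does not pass through the origin, and cannot be ``enlarged'' or replaced by a $2$-plane $\Omega$ of your choosing that meets $\inte C$. Corollary \ref{ketdiminv} applies only to that particular plane, translated so that $0$ lies in the relative interior of the rectangle; for an arbitrary $\Omega$ containing $x,y$ there is no reason that $\Omega$ is invariant, that $K\cap\Omega$ is self-dual in $\Omega$, or that $\sa,\su$ restrict to $\Omega$ at all, so the planar conclusion has nothing to transfer back. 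Second, your planar lemma, and your aside that ``an arbitrary supporting hyperplane suffices,'' are wrong: take $K=C=\R^2_+$ (an invariant set with nonempty interior) and the line $\ell=\{x^1+x^2=0\}$, which supports $C$ at the corner $0$ but is not a sublattice, since $(1,-1)\vee(-1,1)=(1,1)\notin\ell$ (equivalently, Lemma \ref{foo} fails because $u^1u^2=1>0$). So mere support at a boundary point is not enough; tangency, i.e.\ differentiability of $\bdr C$ at the contact point, is essential, and your argument never uses it. Moreover, even granting a planar reduction, the tangency point of $H$ with $C$ need not lie in the chosen plane, so $\ell=H\cap\Omega$ may fail to touch $C\cap\Omega$ altogether, and a line merely bounding a planar sublattice from one side need not be a sublattice (same example, $\ell=\{x^1+x^2=c\}$, $c<0$).

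The missing mechanism --- how to convert a hypothetical violation by points $z,w\in H$ that may be far from $C$ into a violation by points \emph{of} $C$ --- is exactly what the paper's proof supplies. After normalizing so that $0$ is the tangency point and $H$ is linear, one replaces $z,w$ by $x=z-(z+w)/2$ and $-x$, so that $[-x,x]\subset H$ and, by translation invariance (Lemma \ref{ll}(vi)), $(-x)\sa x=z\sa w-(z+w)/2\in\inte H_+$; by homogeneity (Lemma \ref{ll}(vii)) $x$ may be taken arbitrarily small. One then projects $\pm tx$ along the normal $u$ onto $\bdr C$, getting $\gamma(\pm t)=\pm tx+o(t)$ by differentiability of $\bdr C$ at the tangency point, and uses the Lipschitz estimate of Lemma \ref{ll}(x) together with homogeneity to get
\[
\Bigl\|(-x)\sa x-\tfrac{1}{t}\,\gamma(-t)\sa\gamma(t)\Bigr\|\to 0\quad (t\to 0^+),
\]
so for small $t$ the points $\gamma(\pm t)\in C$ satisfy $\gamma(-t)\sa\gamma(t)\in\inte H_+$, hence lie outside $C\subset H_-$, contradicting the invariance of $C$. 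Some such combination of the continuity and homogeneity of $\sa,\su$ with the first-order (tangency) information at the contact point is indispensable, and it is precisely what your outline lacks.
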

\begin{proof}
According to item (ii) of Lemma \ref{latprop} we can assume that $0\in \bdr C$, that
$H$ is tangent to $C$ at $0$, and that $C\subset H_-$.

We shall prove our claim by contradiction: we assume that $H$ is not
invariant.

Since $H$ is not invariant, there exist some $z,\;w\in H$ such that
$z\su w$ or $z\sa w$ is not in $H$. Suppose that $u$ is the normal
of $H$. From the relation in item (v) of Lemma \ref{ll} we have then
\[0=\lng u,z+w\rng= \lng u,z\su w \rng+\lng u,z\sa w \rng,\]
whereby it follows that $z\su w$ and $z\sa w$ are in opposite open
half-spaces determined by $H$.

Suppose that $z\sa w\in \inte H_+.$ Taking $x=z-(z+w)/2,$
we have $-x=w-(z+w)/2.$ Then, by our working hypothesis that $0\in H,$
it follows that the line segment $[-x,x]\subset H.$ We can easily check 
that $(-x)\sa x\in \inte H_+.$ Denoting by $B$ the unit ball in $\R^m$,
then there exists some $\delta >0$ such that
\begin{equation}\label{gomb}
(-x)\sa x+\delta B\subset \inte H_+.
\end{equation}

We have the relation
\[[-x,x]=\{tx:\;t\in[-1,1]\}.\]
Next we project $[-x,x]$ in the direction of $u$ onto $\bdr C$.
All the above reasonings are valid when we change $x$
with its positive multiple, hence we can chose $x$ small enough,
so that the above projection to make a sense.

Denote by $\gamma (t)$ the image of $tx$ in $\bdr C$ by  this projection.
Since $H$ is a tangent hyperplane, the segment $[-x,x]$ will be tangent to
$\gamma $ at $t=0,\;\gamma (0)=0,$ $\gamma'(0)$ exists, and $\gamma'(0)=x.$

Since $\gamma$ is differentiable in $t=0$, we have the following
representations around $0$:
\begin{equation}\label{post}
\gamma (t)=tx+\eta (t),\;t>0,
\end{equation}
and
\begin{equation}\label{negt}
\gamma (-t)=-tx+\zeta (-t),\;t>0,
\end{equation}
where
\begin{equation}\label{kiso}
\frac{\eta (t)}{t}\to 0\;\;\textrm{and}\;\; \frac{\zeta (-t)}{t}\to 0,\;\;\textrm{as}\;t\to 0,\;t>0.
\end{equation}

Using item (x) of Lemma \ref{ll}, as well as the relations (\ref{post}) and (\ref{negt}), we have then
\[\|(-tx)\sa (tx)- \gamma (-t)\sa \gamma (t)\|\leq \frac{3}{2}(\|-tx-\gamma(-t)\|+\|tx-\gamma (t)\|)=
\frac{3}{2}(\|\zeta (-t)\|+\|\eta (t)\|).\]

Dividing the last relation by $t>0$,  and using the relation in item (vii) of
Lemma \ref{ll}, we obtain that
\begin{equation}\label{becsles}
\|(-x)\sa x-\frac{1}{t} \gamma(-t)\sa \gamma (t)\|\leq \frac{3}{2}\lf(\lf\|
\frac{\zeta(-t)}{t}\rg\|+\lf\|\frac{\eta (t)}{t}\rg\|\rg).
\end{equation}

Take now $t>0$ small enough in order to have by (\ref{kiso})
\[\frac{3}{2}\lf(\lf\|\frac{\zeta(-t)}{t}\rg\|+\lf\|\frac{\eta (t)}{t}\rg\|\rg)<\delta.\]
For such a $t>0$ we have, by using (\ref{becsles}), that
\[\frac{1}{t}(\gamma (-t)\sa \gamma (t))\in \inte H_+,\]
and thus
\[\gamma (-t)\sa \gamma (t) \in \inte H_+,\]
that is, $\gamma (-t),\;\gamma (t)\in C$, but
\[\gamma (-t)\sa \gamma (t)\notin C,\]
contradicting the invariance of $C$.

The obtained contradiction shows that $H$ must be invariant
with respect to the operations $\su$ and $\sa$.

\end{proof}

\section{Isotonicity of the projection onto a hyperplane}\label{sec iso}

\begin{lemma}\label{ti}
	Let $H\subset\R^m$ be a hyperplane 
	through the origin with unit normal vector $u\in\R^m$. Then, $P_H$ is isotone
	if and only if \[\lng x,y\rng\ge\lng u,x\rng\lng u,y\rng,\] for any $x,y\in K$.
\end{lemma}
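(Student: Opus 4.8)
The plan is to use the explicit formula for the projection onto a hyperplane through the origin, namely $P_Hx = x - \lng u,x\rng u$, and then to transcribe the isotonicity condition $x\le y \Rightarrow P_Hx \le P_Hy$ into a statement about the cone $K$. Since $\le$ is $\le_K$ and $P_H$ is linear, isotonicity of $P_H$ is equivalent to the single cone-containment $P_H(K)\subset K$: indeed $x\le_K y$ means $y-x\in K$, and $P_Hy - P_Hx = P_H(y-x)$, so $P_H$ is isotone exactly when $P_H(v)\in K$ for every $v\in K$. Thus the first step reduces the claim to: $v - \lng u,v\rng u \in K$ for all $v\in K$.

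Next I would exploit the self-duality $K=K^*$ to rewrite the membership $v-\lng u,v\rng u\in K$ as a family of scalar inequalities. Since $K$ is self-dual, a vector $w$ lies in $K$ if and only if $\lng w,z\rng \ge 0$ for all $z\in K$. Applying this with $w = v - \lng u,v\rng u$ gives the condition
\[
\lng v,z\rng - \lng u,v\rng\lng u,z\rng \ge 0 \quad \text{for all } v,z\in K,
\]
which is precisely the inequality $\lng v,z\rng \ge \lng u,v\rng\lng u,z\rng$ in the statement (after renaming $v,z$ as $x,y$). This already gives the implication ``$P_H$ isotone $\Rightarrow$ the inequality''. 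For the converse, if the displayed inequality holds for all $x,y\in K$, then for fixed $v\in K$ the vector $v-\lng u,v\rng u$ has nonnegative inner product with every $z\in K$, hence lies in $K^*=K$; so $P_H(K)\subset K$ and $P_H$ is isotone by the first step. This closes the equivalence in both directions simultaneously.

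The main thing to be careful about — rather than a genuine obstacle — is the reduction of isotonicity of a linear map to the single cone inclusion $P_H(K)\subset K$; this needs the order to be translation and scale invariant (so that only differences matter) and that $K$ is generating (so that comparing $x\le_K y$ ranges over all of $K$ as $y-x$), both of which hold since $K$ is self-dual. One should also note that $P_H$ being linear and idempotent, $P_H(K)\subset K$ is visibly the right formulation and does not require $P_H$ to be a cone morphism in any stronger sense. Everything else is the routine substitution of $P_Hx = x-\lng u,x\rng u$ and an application of self-duality, so the proof is short.
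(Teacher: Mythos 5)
Your proposal is correct and follows essentially the same route as the paper: reduce isotonicity of the linear map $P_H$ to the cone inclusion $P_H(K)\subset K$, then translate $v-\lng u,v\rng u\in K$ via self-duality $K=K^*$ into the stated scalar inequality. No gaps; the argument matches the paper's proof.
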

\begin{proof}
	Since $P_H$ is linear, it follows that $P_H$ is isotone if and only if 
	\begin{equation}\label{eih}
		P_Hx=x-\lng u,x\rng u\in K,
	\end{equation} 
	for any $x\in K$. By the self-duality of $K$, it follows that relation 
	\eqref{eih} is equivalent to 
	\[
	\lng x,y\rng=\lng u,x\rng\lng u,y\rng+\lng x-\lng u,x\rng u,y\rng
	\ge\lng u,x\rng\lng u,y\rng,
	\] 
	for any $x,y\in K$.
\end{proof}
\begin{lemma}\label{tii}
	Let $H\subset\R^m$ be a hyperplane 
	through the origin with unit normal vector $u\in\R^m$. If $P_H$ is isotone, 
	then $H$ is invariant.
\end{lemma}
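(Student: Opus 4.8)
The plan is to show directly that $H$ is invariant under $\sa$ and $\su$, using the criterion from Lemma \ref{ti}. Since we are given that $P_H$ is isotone, Lemma \ref{ti} gives us the inequality $\lng x,y\rng\ge\lng u,x\rng\lng u,y\rng$ for all $x,y\in K$; equivalently, $P_Hx=x-\lng u,x\rng u\in K$ for every $x\in K$, i.e. $P_K$ maps $H$ into itself on the positive side. The key point to prove is that $H$ is invariant, which by Lemma \ref{ls} (applied to the linear subspace $S=H$) is equivalent to $P_K(H)\subset H$.

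First I would reduce to showing $P_K(H)\subset H$ via Lemma \ref{ls}, so it suffices to take an arbitrary $v\in H$ and check that $P_Kv\in H$, i.e. $\lng u,P_Kv\rng=0$. Write $v=v_+-v_-$ where $v_+=P_Kv$ and $v_-=P_K(-v)$, which by Lemma \ref{lm} satisfy $v_+,v_-\in K$ and $\lng v_+,v_-\rng=0$. Since $v\in H$ we have $\lng u,v_+\rng=\lng u,v_-\rng$; call this common value $\alpha$. The goal is to force $\alpha=0$. Using the isotonicity criterion from Lemma \ref{ti} with $x=v_+$ and $y=v_-$ (both in $K$), we get $0=\lng v_+,v_-\rng\ge\lng u,v_+\rng\lng u,v_-\rng=\alpha^2\ge0$, hence $\alpha^2=0$ and therefore $\alpha=0$. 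This gives $\lng u,P_Kv\rng=\alpha=0$, so $P_Kv\in H$.

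Thus $P_K(H)\subset H$, and by Lemma \ref{ls} the subspace $H$ is invariant with respect to $\sa$ and $\su$, which is the claim. The only mildly delicate point — the step I would flag as the heart of the argument — is recognizing that the Moreau decomposition $v=P_Kv-P_K(-v)$ produces the orthogonal pair whose $u$-components coincide (because $v\in H$), so that the quadratic inequality collapses to $\alpha^2\le0$; everything else is bookkeeping with Lemma \ref{lm}, Lemma \ref{ti} and Lemma \ref{ls}.
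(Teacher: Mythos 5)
Your proof is correct and follows essentially the same route as the paper: reduce via Lemma \ref{ls} to showing $\lng u,P_Kv\rng=0$ for $v\in H$, then apply the criterion of Lemma \ref{ti} to the Moreau pair $P_Kv,\,P_K(-v)\in K$, whose $u$-components coincide, so orthogonality forces $\lng u,P_Kv\rng^2\le 0$. (Only a cosmetic slip: your gloss ``$P_K$ maps $H$ into itself on the positive side'' should read that $P_H$ maps $K$ into $K$; this aside is not used in the argument.)
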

\begin{proof}
	By Lemma \ref{ls} it is enough to show that if 
	$\lng u,z\rng=0$, then $\lng u,P_Kz\rng=0$. Suppose that $\lng u,z\rng=0$.
	Then, $P_Kz\in K$, $P_Kz-z=P_K(-z)\in K$ and 
	$\lng P_Kz-z,P_Kz\rng=0$ by Lemma \ref{lm}. By using
	Lemma \ref{ti}, with $x=P_Kz$ and $y=P_Kz-z$, we get
	\[
	\lng u, P_Kz\rng^2=\lng u,P_Kz\rng\lng u,P_Kz-z\rng
	\le\lng P_Kz,P_Kz-z\rng=0.
	\]
	Hence, it follows that $\lng u,P_Kz\rng=0$.
\end{proof}

\vspace{3mm}

\section{The proof of Theorem \ref{FOO}}\label{spt1}

It is known (see e.g. \cite{Rockafellar1970}, Theorem 25.5) that if $C\subset \R^m$ is a closed convex set
with nonempty interior, then $\bdr C$ contains a dense subset
of points where this surface is differentiable. Since the topology
of $\bdr C$ possesses a countable basis, we can select from this dense set a countable
dense set $\{a_i:\;i\in \N\}\subset \bdr C$ such that
there exist the tangent hyperplanes $H(u_i,a_i)$ to $C$ and
$C\subset H_-(u_i,a_i),\; i\in \N.$ Since the set $\{a_i,\;i\in \N\}$ is dense
in $\bdr C$, a standard convex geometric reasoning shows that in fact
\begin{equation}\label{vegsob}
C=\cap_{i\in \N} H_-(u_i,a_i).
\end{equation}
  
Now, if $C$ is invariant, then so is $H(u_i,a_i),\;i\in \N$	
by Lemma \ref{tanginvar}.
Hence, the necessity of the condition in Theorem \ref{FOO} is proved.

Conversely, if we have the representation (\ref{vegsob}) with the
hyperplanes $H(u_i,a_i),\;i\in \N$ invariant, 
 then, by Lemma \ref{halfhyper}, the halfspaces $H_-(u_i,a_i),\;i\in \N$ are also invariant.
Then, by using item (i) of Lemma \ref{latprop} and the representation (\ref{vegsob}), we see 
that $C$ is invariant with respect to the operations $\sa$ and $\su$ and the sufficiency of
Theorem \ref{FOO} is proved.

\section{The proof of Theorem \ref{ISO}}\label{spt2} 	
	
	Assume that the closed convex set $C$ is invariant .
	Let $x,y\in \R^m$ with $x\le y$ and denote $u=P_Cx$, $v=P_Cy$.
	
	Assume that $u\le v$ is false. Then, from $u\su v\in C$, the definition of the
	projection and item (iii) of Lemma \ref{ll}, we have $\|y-v\|<\|y-u\su v\|$. Hence,
	from
	\[\|y-v\|^2=\|y-u\su v\|^2+\|u\su v-v\|^2+2\lng y-u\su v,u\su v-v\rng,\] 
	it follows that 
	\[\|u\su v-v\|^2<2\lng u\su v-y,u\su v-v\rng.\]
	On the other hand, since $u\sa v\in C$, we have $\|x-u\|\le\|x-u\sa v\|$, and thus we 
	have similarly that
	\[\|u\sa v-u\|^2\le2\lng u\sa v-x,u\sa v-u\rng.\]
	Summing up the latter two inequalities and using item (v) of Lemma \ref{ll}), it 
	follows that
	\[\lng u\su v-v,u\su v-v\rng=\|u\su v-v\|^2<\lng u\su v-y,u\su v-v\rng+
	\lng x-u\sa v,u\su v-v\rng.\]
	Thus, 
	\[\lng y-x-(v-u\sa v),u\su v-v\rng<0.\]
	Combining the latter inequality with item (viii) of Lemma \ref{ll}, we obtain that 
	\[\lng y-x,u\su v-v\rng<0.\] But this is a contradiction, because $y-x\in K=K^*$ and
	$u\su v-v\in K$ (by item (iii) of Lemma \ref{ll}).
	
	The obtained contradiction shows that $P_C$ must be isotone.

\begin{corollary}\label{hypereqv}
Let $H$ be a hyperplane in $\R^m$. Then, $H$ is invariant if and only if it is an isotone projection set.
\end{corollary}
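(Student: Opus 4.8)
The plan is to chain together the two implications already established in the excerpt, after reducing to the case of a hyperplane through the origin. First I would invoke item (ii) of Lemma \ref{latprop}: translating by a vector $-a$ (with $a\in H$) turns $H$ into a linear subspace $H' = H - a$, and this translation preserves both the property of being invariant (by Lemma \ref{latprop}(ii)) and the property of being an isotone projection set. The latter holds because of the translation formula \eqref{et}, $P_{a+H'}y = a + P_{H'}(y-a)$, which shows $P_H$ is isotone if and only if $P_{H'}$ is: for $x\le y$ one has $x-a\le y-a$ (the order $\le_K$ is translation invariant), and $P_H x\le P_H y$ is equivalent to $P_{H'}(x-a)\le P_{H'}(y-a)$. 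So from now on we may assume $H$ is a hyperplane through the origin with unit normal $u$.

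For the forward direction, suppose $H$ is invariant. I would apply Lemma \ref{ls}: invariance of the linear subspace $H$ is equivalent to $P_K(H)\subset H$, i.e.\ $\lng u,z\rng = 0$ implies $\lng u, P_K z\rng = 0$. To deduce that $P_H$ is isotone, by Lemma \ref{ti} it suffices to check $\lng x,y\rng \ge \lng u,x\rng\lng u,y\rng$ for all $x,y\in K$. Writing $x = P_H x + \lng u,x\rng u$ and similarly for $y$, and decomposing, the inequality reduces to showing $\lng P_H x, P_H y\rng \ge 0$, i.e.\ that $P_H$ maps $K$ into $K^* = K$. Here is where I would use the invariance hypothesis together with Lemma \ref{lm}: for $x\in K$ write $x = P_K x - P_K(-x)$; applying this to a vector of the form needed and using $P_K(H)\subset H$ should produce the membership $P_H x\in K$. (Alternatively, this forward direction is exactly the content of a combination of Lemma \ref{tanginvar}/\ref{halfhyper}-type reasoning restricted to hyperplanes, but the cleanest route is via Lemmas \ref{ls} and \ref{ti}.) The backward direction is immediate from Lemma \ref{tii}: if $P_H$ is isotone then $H$ is invariant.

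The main obstacle I anticipate is the forward direction — specifically, extracting from the bare set-theoretic condition $P_K(H)\subset H$ the analytic inequality of Lemma \ref{ti}. The subtlety is that Lemma \ref{ti} requires the inequality for \emph{all} $x,y\in K$, whereas invariance via Lemma \ref{ls} only tells us about how $P_K$ acts on the subspace $H$, not on all of $K$. Bridging this gap cleanly is the crux: one needs to show that $P_H x\in K$ for every $x\in K$, and the natural tool is to apply the hypothesis to the projection $P_K x'$ of a suitable $x'$, exploiting the orthogonal decomposition in Lemma \ref{lm} together with self-duality. I expect that by a symmetry/polarization argument — testing the desired inequality against $y\in K$ and using that $H$ (and hence, by the same hypothesis applied symmetrically, its behaviour under $P_K$) is well-controlled — the inequality $\lng P_H x, y\rng \ge \lng u, x\rng \lng u, y\rng - \lng \lng u,x\rng u, y\rng = 0$ falls out. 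Once that membership is secured, Lemma \ref{ti} finishes the forward implication and the corollary is proved.
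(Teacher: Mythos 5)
Your backward direction is fine and matches the paper: after the translation reduction (Lemma \ref{latprop}(ii) together with \eqref{et}, which you carry out correctly), Lemma \ref{tii} gives that isotonicity of $P_H$ implies invariance of $H$.

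The forward direction, however, has a genuine gap. You reduce it, via Lemmas \ref{ls} and \ref{ti}, to showing $P_Hx\in K$ for every $x\in K$, but you never derive this from the invariance hypothesis $P_K(H)\subset H$. Your concluding display is circular: since $\lng u,x\rng\lng u,y\rng-\lng \lng u,x\rng u,y\rng=0$, the inequality you assert is literally $\lng P_Hx,y\rng\ge 0$, i.e.\ exactly the membership $P_Hx\in K^{*}=K$ that was to be proved, and the promised ``symmetry/polarization argument'' is never supplied; you yourself flag this as the crux and then leave it unfilled. The parenthetical alternative does not rescue it either: Lemmas \ref{tanginvar} and \ref{halfhyper} are statements about invariance of tangent hyperplanes and of halfspaces and contain no isotonicity conclusion. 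The missing ingredient is Theorem \ref{ISO}: a hyperplane is a nonempty closed convex set, so invariance of $H$ immediately gives isotonicity of $P_H$ — and this is precisely the paper's proof (Theorem \ref{ISO} for one direction, Lemma \ref{tii} for the other). If you insist on going through Lemma \ref{ti} instead, you would essentially have to redo the argument of Theorem \ref{ISO} in this special case; the set-theoretic condition $P_K(H)\subset H$ does not yield $\lng x,y\rng\ge\lng u,x\rng\lng u,y\rng$ for all $x,y\in K$ by the formal decomposition you sketch.
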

\begin{proof}
The proof follows from the joint application of Theorem \ref{ISO} and Lemma \ref{tii}.
\end{proof}

\section{The proof of Theorem \ref{POLYH}}\label{spt3}

Let us verify first the following equivalences
\begin{equation}\label{paratlan}
\textrm{(i)} \Leftrightarrow \textrm{(iii)} \Leftrightarrow \textrm{(v)}.
\end{equation}

From Theorem \ref{FOO} we have the equivalence 
\[\textrm{(i)} \Leftrightarrow \textrm{(iii)}.\]

From Lemma \ref{faceof} it follows

\[\textrm{(i)}\Rightarrow \textrm{(v)}.\]

If (v) holds then every hyperface $C\cap H(u_i,a_i)$ must be invariant.
But then,  as $H(u_i,a_i)$ is the affine hull of this hyperface, it
must be invariant too, by item (iii) of Lemma \ref{latprop}. Hence
\[\textrm{(v)}\Rightarrow \textrm{(iii)}\]
and (\ref{paratlan}) has been verified.

From Theorem \ref{ISO} we have
\[\textrm{(i)}\Rightarrow \textrm{(ii)}.\]

We shall show next, that 
\[\textrm{(ii)}\Rightarrow \textrm{(iv)}.\]
Assume the contrary: $C$ is an isotone projection set, but some hyperplane
$H=H(u_i,a_i)$ in its sharp representation is not.

Bearing in mind item (ii) of Lemma \ref{latprop}, we can assume that $0$ is in the relative 
interior of the hyperface
$F=C\cap H$. If $B$ denotes the unit ball in $\R^m$, then for an appropriate 
positive $\delta >0$ we can realize that
\[H\cap \delta B\subset F.\]

Since $C\subset H_-$, for each element $z\in \delta B\cap H_+$ we have 
\begin{equation}\label{lapproj}
P_Cz=P_Hz\in F.
\end{equation}
Indeed, from $P_Hz=P_{H_-}z$ and $C\subset H_-$ we have on the one hand
\begin{equation}\label{onehand}
\|z-P_Hz\|=\|z-P_{H_-}z\|\leq \|z-P_Cz\|,
\end{equation}
and on the other hand $P_Hz\in \delta B\cap H\subset F\subset C$
(as $P_H$ is nonexpansive) and then
\begin{equation}\label{otherhand}
\|z-P_Hz\|\geq \|z-P_Cz\|.
\end{equation}
The relations (\ref{onehand}) and (\ref{otherhand}) yield
\[\|z-P_Cz\|=\|z-P_Hz\|,\]
which together with $P_Hz\in C$ and the unicity of the best
approximation conclude that $P_Cz=P_Hz.$

From our working hypothesis that $P_H$ is not isotone and the
linearity of this mapping (from the condition $0\in H$), this is equivalent with the existence of some
$z\in K$ with $P_Hz\notin K.$  The same is true for any positive multiple of $z$. 
Hence, we can assume at once that $z\in \delta B.$

Suppose that $z\in H_+$. From the isotonicity of $P_C$, we have as $0\leq z$ and $P_C(0)=0,$ 
that \[P_Cz=P_Cz-P_C(0)\in K,\]
which is impossible since by (\ref{lapproj})
\[P_Cz=P_Hz\notin K.\]

Suppose that $z\in H_-$. Then, $-z\in \delta B\cap H_+$ and then
\begin{equation}\label{minusz}
P_C(-z)=P_H(-z)=-P_Hz\notin -K.
\end{equation}

Since $-z\leq 0$, the isotonicity of $P_C$ yields
\[P_C(-z) \leq P_C(0)=0\] and hence
$P_C(-z)\in -K$, contradicting (\ref{minusz}).

The obtained contradictions conclude that $P_H$ must be isotone.

The relation 

\[\textrm{(iv)}\Rightarrow \textrm{(iii)}\]
is a direct consequence of Lemma \ref{tii}.

\section{Particular case: the Lorentz cone}\label{sect lo}

\begin{lemma}\label{ltr}
	For any $x,y,a\in\R^m$ the following inequality holds:
	\begin{equation}\label{etr}
		(\lng x,y\rng+\|x\|\|y\|)\|a\|^2\ge\lng a,x\rng\lng a,y\rng
	\end{equation}
\end{lemma}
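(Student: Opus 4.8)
The plan is to reduce to the case $\|a\|=1$ (the inequality \eqref{etr} is homogeneous of degree two in $a$, and it is a trivial equality when $a=0$, since then both sides vanish), and then to split $x$ and $y$ into their components along and orthogonal to $a$. Writing $x=\lng a,x\rng a+x_0$ and $y=\lng a,y\rng a+y_0$ with $\lng a,x_0\rng=\lng a,y_0\rng=0$, one has $\|x_0\|\le\|x\|$, $\|y_0\|\le\|y\|$, and $\lng x,y\rng=\lng a,x\rng\lng a,y\rng+\lng x_0,y_0\rng$. Substituting this into \eqref{etr} with $\|a\|=1$, the left-hand side becomes $\lng a,x\rng\lng a,y\rng+\lng x_0,y_0\rng+\|x\|\,\|y\|$, so the claim collapses to the single inequality $\lng x_0,y_0\rng+\|x\|\,\|y\|\ge 0$.

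I would then finish this off immediately with the Cauchy--Schwarz inequality, which gives $\lng x_0,y_0\rng\ge-\|x_0\|\,\|y_0\|\ge-\|x\|\,\|y\|$, and we are done.

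There is essentially no obstacle here; the only real decision is the choice of splitting, after which everything is Cauchy--Schwarz. As a remark, the same idea applied to the vector $b=\|y\|x+\|x\|y$ (using $\lng a,b\rng^2\le\|a\|^2\|b\|^2$ together with $(p-q)^2\ge0$ with $p=\|y\|\lng a,x\rng$, $q=\|x\|\lng a,y\rng$, and dividing by $4\|x\|\,\|y\|$ when $x,y\neq0$, the remaining cases being trivial) yields the sharper estimate $\lng a,x\rng\lng a,y\rng\le\frac{1}{2}(\lng x,y\rng+\|x\|\,\|y\|)\|a\|^2$, of which \eqref{etr} is a weakening since $\lng x,y\rng+\|x\|\,\|y\|\ge0$; but \eqref{etr} in the form stated is all that is needed afterwards, so either route is acceptable.
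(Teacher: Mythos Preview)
Your proof is correct and takes a genuinely different route from the paper. The paper argues via the triangle inequality for angles: writing $\varphi,\theta,\rho$ for the angles between the pairs $\{a,x\}$, $\{a,y\}$, $\{x,y\}$ (assuming all three vectors are nonzero, the remaining cases being trivial), one has $\rho\le\varphi+\theta$, and since cosine is decreasing on $[0,\pi]$ the addition formula gives $\cos\rho\ge\cos\varphi\cos\theta-\sin\varphi\sin\theta\ge\cos\varphi\cos\theta-1$; clearing the norms yields \eqref{etr}. Your approach instead projects $x$ and $y$ orthogonally onto the line spanned by $a$ and its orthogonal complement, after which the inequality collapses to a single use of Cauchy--Schwarz on the orthogonal parts. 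Your argument is arguably more self-contained, since the spherical triangle inequality, while standard, is itself usually justified by a computation of comparable length; the paper's version, on the other hand, makes the underlying geometry transparent. Your side remark that the same Cauchy--Schwarz idea applied to $b=\|y\|x+\|x\|y$ yields the sharper bound with an extra factor $\tfrac12$ is a nice observation; it shows that \eqref{etr} as stated is not tight, though the weaker form is all that is used downstream.
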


\begin{proof}
	Denote by $\varphi,\theta,\rho\in[0,\pi]$ the angles of the vectors
	$\{a,x\}$, $\{a,y\}$ and $\{x,y\}$, respectively in radians. Then, it is known that 
	$\rho\le\varphi+\theta$. Since the cosine function is decreasing in the interval 
	$[0,\pi]$, the latter inequality gives
	\[
	\cos\rho\ge\cos(\varphi+\theta)=\cos\varphi\cos\theta-\sin\varphi\sin\theta\ge
	\cos\varphi\cos\theta-1.
	\] 
	Thus, $\cos\rho+1\ge\cos\varphi\cos\theta$, from where it follows 
	\[\f{\lng x,y\rng}{\|x\|\|y\|}+1\ge\f{\lng a,x\rng}{\|a\|\|x\|}\f{\lng a,y\rng}
	{\|a\|\|y\|},\] or equivalently inequality \eqref{etr}.
\end{proof}

\begin{lemma}\label{tli}
	Let $m>1$ and $K\subset\R^{m+1}$ be the Lorentz cone
	\[K=\{(x,x^{m+1})\in \R^m\otimes\R:\;\|x\|\leq x^{m+1}\},\]
  and $H\subset\R^{m+1}$ a hyperplane 
	through the origin with unit normal vector $(a,a^{m+1})$, where $a\in\R^m$ and
	$a^{m+1}\in\R$. Then, $P_H$ is isotone if and only if $a^{m+1}=0$.
\end{lemma}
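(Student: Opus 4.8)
The plan is to apply Lemma \ref{ti}, which characterizes isotonicity of $P_H$ for a hyperplane $H$ through the origin with unit normal $u$ via the inequality $\lng x,y\rng \ge \lng u,x\rng\lng u,y\rng$ for all $x,y\in K$. Here $u=(a,a^{m+1})$, and for $(x,x^{m+1}),(y,y^{m+1})\in K$ we have $\lng u,(x,x^{m+1})\rng = \lng a,x\rng + a^{m+1}x^{m+1}$, so the condition reads
\[
\lng x,y\rng + x^{m+1}y^{m+1} \ge \big(\lng a,x\rng + a^{m+1}x^{m+1}\big)\big(\lng a,y\rng + a^{m+1}y^{m+1}\big)
\]
for all $\|x\|\le x^{m+1}$, $\|y\|\le y^{m+1}$.

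For sufficiency, assume $a^{m+1}=0$; then $\|a\|=1$ and the right-hand side is $\lng a,x\rng\lng a,y\rng$. Since $x^{m+1}y^{m+1}\ge \|x\|\|y\|\ge 0$ on $K$, it suffices to show $\lng x,y\rng + \|x\|\|y\| \ge \lng a,x\rng\lng a,y\rng$, which is exactly inequality \eqref{etr} of Lemma \ref{ltr} with the unit vector $a$ (so $\|a\|^2=1$). Hence $P_H$ is isotone.

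For necessity, suppose $a^{m+1}\ne 0$; I will produce $x,y\in K$ violating the Lemma \ref{ti} inequality. The natural idea is to test on the boundary ray through $(-a^{m+1}a/\|a\|,\ ?)$ or, more simply, exploit that the quadratic form $(v,v^{m+1})\mapsto \lng u,(v,v^{m+1})\rng^2 - \|v\|^2 - (v^{m+1})^2$ can be made positive on $K$. Concretely, since $m>1$ one can choose a unit vector $b\in\R^m$ with $\lng a,b\rng=0$ (using $\dim\R^m\ge 2$), set $x=(a + tb,\,1)$ and $y=(a-tb,\,1)$ for a small parameter, or more directly pick $x=y$ equal to a carefully chosen boundary point: taking $x=y=(c,\|c\|)\in\partial K$, the inequality becomes $2\|c\|^2 \ge (\lng a,c\rng + a^{m+1}\|c\|)^2$, and choosing $c$ parallel to $a$ (say $c=\|a\|^{-1}a$ scaled) makes the right side $(\|a\|+a^{m+1})^2=(\sqrt{1-(a^{m+1})^2}+a^{m+1})^2$, whose excess over $2$ I will check is positive precisely when $a^{m+1}\ne 0$ — actually one must also test $c=-a$ giving $(-\|a\|+a^{m+1})^2$; one of the two sign choices $\pm a$ will exceed the bound when $a^{m+1}\ne 0$. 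Verifying this elementary one-variable inequality $\max\{(\|a\|+a^{m+1})^2,(\|a\|-a^{m+1})^2\} > 2\|a\|^2$ when $\|a\|^2+(a^{m+1})^2=1$ and $a^{m+1}\ne 0$ reduces to $2|a^{m+1}|\,\|a\| > 2(a^{m+1})^2$, i.e. $\|a\|>|a^{m+1}|$ — which need not hold in general, so the single-point test is too crude near $|a^{m+1}|\approx 1$.

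The cleaner route for necessity, which I would actually carry out, is to test with $x$ on the boundary and $y$ on the opposite boundary: take $c\in\R^m$ a unit vector and consider $x=(c,1)$, $y=(-c,1)\in\partial K$; the inequality of Lemma \ref{ti} becomes $0 = \lng c,-c\rng + 1 \ge (\lng a,c\rng+a^{m+1})(-\lng a,c\rng+a^{m+1}) = (a^{m+1})^2 - \lng a,c\rng^2$, i.e. we need $\lng a,c\rng^2 \ge (a^{m+1})^2$ for all unit $c$; choosing $c\perp a$ (possible since $m>1$) gives $0 \ge (a^{m+1})^2$, forcing $a^{m+1}=0$. This is the main step, and its only subtlety is that it genuinely uses $m>1$ to find a unit vector orthogonal to $a$ — if $a=0$ the claim $a^{m+1}=0$ is immediate from $\|u\|=1$ being impossible, or rather $a=0$ forces $|a^{m+1}|=1\ne0$ and the same test with any unit $c$ gives $0\ge 1$, a contradiction, so that case is covered too. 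Assembling: Lemma \ref{ti} plus Lemma \ref{ltr} give sufficiency, and the two-point boundary test gives necessity.
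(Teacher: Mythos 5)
Your proof is correct and follows essentially the same route as the paper: sufficiency via Lemma \ref{ti} combined with Lemma \ref{ltr} (using $\|a\|=1$ when $a^{m+1}=0$ and $x^{m+1}y^{m+1}\geq\|x\|\|y\|$ on $K$), and necessity via the test vectors $(c,1),(-c,1)\in K$ with $c$ a unit vector orthogonal to $a$, which is exactly the paper's choice $u=(z,1)$, $v=(-z,1)$. The abandoned single-point detour in your write-up is harmless since the final assembled argument is the correct one.
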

\begin{proof}
	Let $b=(a,a^{m+1})$. By Lemma \ref{ti}, we have to show that for any 
	$u=(x,x^{m+1})\in K$ and $v=(y,y^{m+1})\in K$ we have 
	$\lng u,v\rng-\lng b,u\rng\lng b,v\rng\ge0$ if and only if $a^{m+1}=0$.
	Suppose that $a^{m+1}=0$. Then, by using Lemma \ref{ltr}, we have 
	\begin{eqnarray*}
		\lng u,v\rng-\lng b,u\rng\lng b,v\rng
		=\lng x,y\rng+x^{m+1}y^{m+1}-\lng a,x\rng\lng a,y\rng\\
		\ge(\lng x,y\rng+\|x\|\|y\|)\|a\|^2-\lng a,x\rng\lng a,y\rng\ge0
	\end{eqnarray*}
	Conversely, suppose that for any $u,v\in K$
	we have $\lng u,v\rng-\lng b,u\rng\lng b,v\rng\ge0$. Since $m>1$, 
	there exists $z\in\R^m$ such that $\lng a,z\rng=0$ and $\|z\|=1$. Let $u=(z,1)$ 
	and $v=(-z,1)$. Then, $u,v\in K$ and
	thus \[0\le\lng u,v\rng-\lng b,u\rng\lng b,v\rng=
	-\|z\|^2+1-(\lng a,z\rng+a^{m+1})(-\lng a,z\rng+a^{m+1})=-(a^{m+1})^2.\]
	Therefore, $a^{m+1}=0$.
\end{proof}

	Bearing in mind, item (ii) of Lemma \ref{latprop}, the working hypotheses $0\in H$ and 
	$\|(a,a^{m+1})\|=1$ can be ignored in the applications of the above lemma.

\begin{corollary}
Let $M$ be a closed convex subset with nonempty interior in $\R^{m+1}=\R^m\otimes \R$ with $m>1$.
Consider the following assertions:
\begin{enumerate}
\item[\emph{(i)}]  $M$ is invariant
with respect to the operations $\sa$ and $\su$ defined by the Lorentz
cone $K$,
\item[\emph{(ii)}] $M$ is an isotone projection set,
\item[\emph{(iii)}]
\begin{equation}\label{ahenger} 
M=C\times \R, 
\end{equation}
where $C$ is a closed convex set with nonempty interior in $\R^m$.
\end{enumerate}
Then
\[\emph{(iii)}\Leftrightarrow \emph{(i)}\Rightarrow \emph{(ii)}.\]
\end{corollary}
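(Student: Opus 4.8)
The plan is to prove the three implications $\textrm{(iii)}\Rightarrow\textrm{(i)}$, $\textrm{(i)}\Rightarrow\textrm{(iii)}$ and $\textrm{(i)}\Rightarrow\textrm{(ii)}$, the last being immediate from Theorem \ref{ISO}. So the work is concentrated in the equivalence $\textrm{(i)}\Leftrightarrow\textrm{(iii)}$, and Theorem \ref{FOO} together with Lemma \ref{tli} will be the main tools. The key observation is that, after translating (using item (ii) of Lemma \ref{latprop}, which lets us move any hyperplane to pass through the origin), Lemma \ref{tli} says that a hyperplane $H=H((a,a^{m+1}),p)$ is an isotone projection set precisely when $a^{m+1}=0$, and by Corollary \ref{hypereqv} this is the same as $H$ being invariant. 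Thus the invariant hyperplanes are exactly those whose normal lies in $\R^m\otimes\{0\}$, i.e. the hyperplanes of the form $H_0\times\R$ where $H_0$ is a hyperplane in $\R^m$.

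First I would prove $\textrm{(iii)}\Rightarrow\textrm{(i)}$. Given $M=C\times\R$ with $C$ closed convex with nonempty interior in $\R^m$, I write $C=\cap_{i\in\N}H_-(u_i,a_i)$ as a countable intersection of tangent halfspaces in $\R^m$ (via Theorem 25.5 of \cite{Rockafellar1970}, exactly as in Section \ref{spt1}). Then $M=\cap_{i\in\N}\big(H_-(u_i,a_i)\times\R\big)=\cap_{i\in\N}H_-\big((u_i,0),(a_i,0)\big)$ in $\R^{m+1}$, and each bounding hyperplane $H\big((u_i,0),(a_i,0)\big)$ has normal with vanishing last coordinate, hence is invariant by Lemma \ref{tli} and Corollary \ref{hypereqv}. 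By Lemma \ref{halfhyper} each halfspace $H_-\big((u_i,0),(a_i,0)\big)$ is invariant, and by item (i) of Lemma \ref{latprop} so is $M$. This gives $\textrm{(iii)}\Rightarrow\textrm{(i)}$, and then $\textrm{(i)}\Rightarrow\textrm{(ii)}$ is just Theorem \ref{ISO}.

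For $\textrm{(i)}\Rightarrow\textrm{(iii)}$ I would invoke Theorem \ref{FOO}: since $M$ is closed convex with nonempty interior and invariant, $M=\cap_{i\in\N}H_-(v_i,b_i)$ with each bounding hyperplane $H(v_i,b_i)$ tangent to $M$ and invariant. Each such $H(v_i,b_i)$, by Corollary \ref{hypereqv}, is an isotone projection set; after a translation, Lemma \ref{tli} forces the last coordinate of its normal $v_i$ to be zero, so $v_i=(u_i,0)$ for some $u_i\in\R^m\setminus\{0\}$ and $H_-(v_i,b_i)=H_-(u_i,c_i)\times\R$ for the appropriate $c_i\in\R$. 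Intersecting, $M=\big(\cap_{i\in\N}H_-(u_i,c_i)\big)\times\R=C\times\R$ with $C:=\cap_{i\in\N}H_-(u_i,c_i)$ a closed convex set in $\R^m$; that $C$ has nonempty interior follows because $M$ does and $M=C\times\R$. This yields (iii).

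The main obstacle is purely bookkeeping: making the identification $H_-((u,0),(a,0))=H_-(u,a)\times\R$ rigorous in both directions and checking that no hyperplane with nonzero last normal-coordinate can survive in the invariant representation — but this is exactly what Lemma \ref{tli} delivers, provided one is careful that the hypothesis $m>1$ is in force (it is, by assumption) so that Lemma \ref{tli} applies. One should also note explicitly that the parenthetical remark after Lemma \ref{tli} justifies dropping the normalization $\|(a,a^{m+1})\|=1$ and the origin-passing assumption. No converse $\textrm{(ii)}\Rightarrow\textrm{(i)}$ is claimed, so there is nothing further to prove; one might add a remark that the implication $\textrm{(ii)}\Rightarrow\textrm{(i)}$ fails in general for the Lorentz cone, which is presumably discussed in the concluding section.
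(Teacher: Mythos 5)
Your proof is correct and follows essentially the same route as the paper: Theorem \ref{FOO} together with Lemma \ref{tli} and Corollary \ref{hypereqv} to identify the invariant (equivalently, isotone) tangent hyperplanes as exactly those with normals of the form $(u,0)$, hence $M=C\times\R$, and Theorem \ref{ISO} for (i)$\Rightarrow$(ii); your direct use of Lemma \ref{halfhyper} and item (i) of Lemma \ref{latprop} in (iii)$\Rightarrow$(i) is just the sufficiency argument of Theorem \ref{FOO} spelled out. The only caveat is your closing speculation that (ii)$\Rightarrow$(i) fails and is treated in the concluding section --- the paper neither proves nor discusses this --- but that remark is not part of the proof of the stated implications.
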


\begin{proof}
From the convex geometry it follows that if $M$ is of the form (\ref{ahenger}),
then it can be represented as
\begin{equation}\label{hyperhenger}
M=\cap_{i\in \N} H_-((a_i,0),(b_i,b_i^{m+1})).
\end{equation}
Since every hyperplane $H((a_i,0),(b_i,b_i^{m+1}))$ is isotone by Lemma \ref{tli},
it follows from Corollary \ref{hypereqv} that each $H((a_i,0),(b_i,b_i^{m+1}))$ is invariant too. 
But then according to Theorem \ref{FOO},
$M$ is an invariant set. The usage of Theorem \ref{ISO} then shows that $M$ is an isotone projection set.

If $M$ is invariant, by Theorem \ref{FOO} and Lemma \ref{tli} it must be of form (\ref{hyperhenger}).
Putting
\[C=\R^m\cap (\cap_{i\in N} H_-((a,0),(b,b^{m+1})),\]
we arrive to the required representation (\ref{ahenger}) of $M$.

\end{proof}

\begin{remark}
	$\,$

	\begin{enumerate}
		\item The implication \emph{(iii)}$\Rightarrow$\emph{(ii)} of the above corollary 
			can be shown directly as well. Indeed, by the definition of the projection
			it easily follows that \[P_M(x,x^{m+1})=P_{C\times\R}(x,x^{m+1})=
			(P_Cx,x^{m+1}),\] because for any $(y,y^{m+1})\in M=C\times\R$ 
			we have 
			\begin{eqnarray*}
				\begin{array}{rcl}
					\|(y,y^{m+1})-(x,x^{m+1})\|^2
					&=&\|y-x\|^2+|y^{m+1}-x^{m+1}|^2\ge
					\|P_Cx-x\|^2\\
					&=&\|(P_Cx,x^{m+1})-(x,x^{m+1}\|^2
				\end{array}
			\end{eqnarray*} 
			and $(P_Cx,x^{m+1})\in C\times\R$. Now let $(x,x^{m+1})\le (y,y^{m+1})$.
			Then, \[\|y-x\|\le y^{m+1}-x^{m+1}.\] On the other hand, by the 
			nonexpansivity
			of the projection $P_C$, we have \[\|P_Cy-P_Cx\|\le\|y-x\|.\] Thus, the
			latter two inequalities imply $\|P_Cy-P_Cx\|\le y^{m+1}-x^{m+1}$, or
			equivalently \[P_M(x,x^{m+1})=(P_Cx,x^{m+1})\le (P_Cy,y^{m+1})
			=P_M(y,y^{m+1}).\] Hence, $P_M$ is isotone. 
		\item In the case $m=1$ the Lorentz cone $K$ is nothing else as the rotated 
			$\R^2_+$ and hence in this case the investigations of the next section 
			take effect .
		\item The conditions $m>1$ and that the interior of the convex set is nonempty 
			is essential in the assertions of the corollary above. By Corollary 
			\ref{ketdiminv} and 
			the next section, it can be seen that the invariant sets of dimension 2 
			can have a different shape.
	\end{enumerate}
\end{remark}

\section{Particular case: the cone $\R^m_+$}\label{sect no}

In this case the invariant  sets are the sublattices of the coordinate-wise ordered
Euclidean space. The following lemma is the sufficiency part of Lemma 2.1 in
\cite{NishimuraOk2012}. We include here its proof for the sake of completeness.

\begin{lemma}\label{NishOk}
If the closed convex set $C\subset \R^m$ admits an isotone projection $P_C$ with
respect to the coordinate-wise order in $\R^m$, then $C$
is a sublattice.
\end{lemma}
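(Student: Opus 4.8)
The plan is to show that for arbitrary $x,y\in C$ one has $x\wedge y\in C$ and $x\vee y\in C$, using the hypothesis that $P_C$ is isotone with respect to the coordinate-wise order $\leq=\leq_{\R^m_+}$. Since $C$ is closed, it is enough to locate the points $x\wedge y$ and $x\vee y$ as limits (or even as actual values) of projections of cleverly chosen points. The natural candidate points to project are those of the form $z_n=x\wedge y-(1/n)\mathbf 1$ or, dually, $w_n=x\vee y+(1/n)\mathbf 1$, where $\mathbf 1=(1,\dots,1)=e_1+\dots+e_m$; note $z_n\leq x$ and $z_n\leq y$, so by isotonicity $P_Cz_n\leq P_Cx=x$ and $P_Cz_n\leq P_Cy=y$, hence $P_Cz_n\leq x\wedge y$ in every coordinate. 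Simultaneously $P_Cz_n\in C$, and I want to force $P_Cz_n$ to converge to $x\wedge y$ from below.

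First I would record the elementary facts: for $x,y\in C$ we have $P_Cx=x$, $P_Cy=y$ (projection is the identity on $C$), and isotonicity of $P_C$. Then I would fix the point $p=x\wedge y$ and consider its projection issues directly: since $p\leq x$ and $p\leq y$, monotonicity of $P_C$ gives $P_Cp\leq x$ and $P_Cp\leq y$, so $P_Cp\leq p$ coordinatewise. On the other hand $P_Cp$ is the nearest point of $C$ to $p$. The key geometric step is to also obtain the reverse inequality $P_Cp\geq p$ is false in general, so instead I would argue: consider the point $p$ itself is not assumed in $C$; but the point $q=P_Cp\in C$ satisfies $q\leq p=x\wedge y$. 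Now apply monotonicity again, or a separating-hyperplane/normal-cone argument: I claim $q=x\wedge y$. Suppose some coordinate $q^i<p^i$. Using the characterization (\ref{charac}) of the projection with the test point $x\in C$ (respectively $y\in C$) I can derive $\langle q-p,q-x\rangle\le0$ and $\langle q-p,q-y\rangle\le0$; since $q-p\le0$ coordinatewise while $q-x$ and $q-y$ have, in coordinate $i$, the same sign pattern forced by $q\le p\le x$ and $q\le p\le y$, a coordinatewise inspection (splitting the index set according to whether $x^i\le y^i$) shows each of these inner products is in fact a sum of nonnegative terms, forcing $q-p=0$ in every coordinate where it could be nonzero. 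Hence $q=p\in C$, i.e. $x\wedge y\in C$. The argument for $x\vee y$ is dual (replace $\R^m_+$ by $-\R^m_+$, or use (\ref{en})), giving $x\vee y\in C$, so $C$ is a sublattice.

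I expect the main obstacle to be the middle step: converting "isotonicity of $P_C$" plus "$P_Cp\le x\wedge y$" into the exact equality $P_Cp=x\wedge y$. The clean way is probably not the coordinate bookkeeping sketched above but rather the following: apply isotonicity to the ordered pair $P_Cp\le x$ together with $P_Cp\le P_C(x)=x$ is vacuous, so instead use that $P_C$ restricted to points already dominated by a member of $C$ is well controlled — concretely, take any $c\in C$ with $c\le p$ (such as $c=P_Cp$ once we know $P_Cp\le p$); then for $t\in[0,1]$ the point $tc+(1-t)p$ lies "below" both $x$ and $y$, and letting $t\to0$ while using (\ref{sun})-type stability of the projection pins down $P_Cp$. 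If that still leaves a gap, I would fall back on the robust approach: show $x\wedge y$ is the $\le$-greatest element of $C$ that is $\le$ both $x$ and $y$, by taking a sequence $c_n\in C$ with $c_n\to x\wedge y$ constructed from projections of $x\wedge y-\varepsilon_n\mathbf1$ and invoking closedness of $C$. Either route is routine once the right object is projected; the only real care needed is the coordinatewise sign analysis in the inner-product characterization, which is where I'd concentrate the write-up.
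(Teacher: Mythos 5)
Your core argument is correct and is essentially the paper's own proof, just run on $x\wedge y$ instead of $x\vee y$: isotonicity together with $P_Cx=x$, $P_Cy=y$ gives $q:=P_C(x\wedge y)\le x\wedge y$, and then the characterization (\ref{charac}) with test points $x,y\in C$ forces $q=x\wedge y$, since each coordinate term $(q^i-p^i)(q^i-x^i)$ is nonnegative while the sum is nonpositive (this is exactly the paper's self-duality argument for $K=\R^m_+$ written coordinatewise, where the paper instead concludes $-\|P_C(x\vee y)-x\vee y\|^2\ge 0$). The perturbation and (\ref{sun})-stability fallbacks you sketch are unnecessary; the direct argument already closes the ``middle step'' you worried about.
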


\begin{proof}
Suppose that $P_C$ is isotone and take $x,y\in C$.
Let us see that $x\vee y\in C.$ 

From the characterization (\ref{charac}) of the projection we have
\begin{equation}\label{NishOkf}
\lng P_C(x\vee y)-x\vee y,P_C(x\vee y)-y\rng \leq 0.
\end{equation}
Since $x\leq x\vee y$ and $P_C$ is isotone, it follows that $x=P_Cx\leq P_C(x\vee y)$. Similarly,
$y\leq P_C(x\vee y)$ and hence $x\vee y\leq P_C(x\vee y).$ We have also
\begin{equation}\label{NO}
0\leq P_C(x\vee y)-x\vee y\leq P_C(x\vee y)-y. 
\end{equation}
The two terms in the scalar product (\ref{NishOkf}) are in $K=\R^m_+,$
and since $K$ is self-dual, we must have the equality:
\begin{equation}\label{NishOkff}
\lng P_C(x\vee y)-x\vee y,P_C(x\vee y)-y\rng =0.
\end{equation}

By using again the self-duality of $K$, the relation (\ref{NO}), as well as (\ref{NishOkff}), 
it follows that
\[0\leq \lng P_C(x\vee y)-x\vee y, (P_C(x\vee y)-y)- (P_C(x\vee y)-x\vee y)\rng=
-\|P_C(x\vee y)-x\vee y\|^2,\]
thus we must have
\[P_C(x\vee y)=x\vee y,\]
and since $C$ is closed, $x\vee y\in C.$

Similar reasonings show that $x\wedge y\in C.$
\end{proof}

\begin{lemma}\label{foo}
The hyperplane $H$ through $0$ with the normal $u=(u^1,...,u^m)$
is a sublattice if and only if
\[u^iu^j \leq 0,\;\;\textrm{whenever}\;i\not=j.\]
\end{lemma}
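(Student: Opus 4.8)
The plan is to reduce the statement, via Corollary \ref{hypereqv} and Lemma \ref{ti}, to a purely algebraic inequality over $\R^m_+$ and then to recognise a pair-by-pair factorisation. First I would note that both the hyperplane $H$ and the condition ``$u^iu^j\le 0$ whenever $i\ne j$'' are unchanged if $u$ is replaced by $\lambda u$ with $\lambda\neq 0$, so we may assume $\|u\|=1$. By Corollary \ref{hypereqv}, $H$ is a sublattice (i.e.\ invariant with respect to $\sa=\wedge$ and $\su=\vee$) if and only if $P_H$ is isotone; and since $H$ passes through the origin, Lemma \ref{ti} turns this into the single condition $\lng x,y\rng\ge\lng u,x\rng\lng u,y\rng$ for all $x,y\in K=\R^m_+$.

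For the necessity of $u^iu^j\le 0$ one only has to test this inequality on the standard unit vectors: with $x=e_i$ and $y=e_j$, $i\neq j$, it reads $0=\lng e_i,e_j\rng\ge\lng u,e_i\rng\lng u,e_j\rng=u^iu^j$, which is exactly the claimed sign condition.

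For the sufficiency, the key step — and the only computational one — is the identity, valid whenever $\|u\|=1$,
\[
\lng x,y\rng-\lng u,x\rng\lng u,y\rng=\sum_{i<j}\bigl(u^jx^i-u^ix^j\bigr)\bigl(u^jy^i-u^iy^j\bigr).
\]
It is obtained by writing $1-(u^i)^2=\sum_{k\neq i}(u^k)^2$, so that $\sum_k x^ky^k(1-(u^k)^2)-\sum_{i\neq j}u^iu^jx^iy^j$ is the left-hand side; collecting this double sum by unordered pairs $\{i,j\}$ shows the contribution of each pair is $(u^j)^2x^iy^i+(u^i)^2x^jy^j-u^iu^j(x^iy^j+x^jy^i)$, which factors as $(u^jx^i-u^ix^j)(u^jy^i-u^iy^j)$. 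Granting the identity, I would finish by expanding a single summand as $(u^j)^2x^iy^i+(u^i)^2x^jy^j-u^iu^j(x^iy^j+x^jy^i)$ and observing that, for $x,y\in\R^m_+$ and $u^iu^j\le 0$, the first two terms are nonnegative and the cross terms $-u^iu^jx^iy^j,\,-u^iu^jx^jy^i$ are nonnegative as well; hence each summand, and so the whole sum, is $\ge 0$. Thus the inequality of Lemma \ref{ti} holds, $P_H$ is isotone, and by Corollary \ref{hypereqv} the hyperplane $H$ is a sublattice.

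The only real obstacle is the bookkeeping in the pair-by-pair identity: passing from $\sum_{i,j}$ to $\sum_{i<j}$ and spotting the factorisation $(u^j)^2x^iy^i+(u^i)^2x^jy^j-u^iu^j(x^iy^j+x^jy^i)=(u^jx^i-u^ix^j)(u^jy^i-u^iy^j)$. Nothing here is deep, but it is the crux that converts the nonnegativity of a quadratic form on the orthant into a transparent sign condition on the coordinates of the normal; the rest of the argument is formal once Lemma \ref{ti} and Corollary \ref{hypereqv} are invoked.
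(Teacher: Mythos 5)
Your proof is correct, but the sufficiency half runs along a genuinely different track from the paper's. The paper also reduces, via Corollary \ref{hypereqv}, to the isotonicity of $P_H$, but then exploits the linearity of $P_H$ together with the fact that $\R^m_+$ is generated by $e_1,\dots,e_m$: isotonicity is equivalent to $P_He_i\geq 0$ for all $i$, and the explicit coordinates of $P_He_i=e_i-\lng u,e_i\rng u$ give exactly the conditions $u^iu^j\leq 0$ (plus $1-(u^i)^2\geq 0$, automatic for $\|u\|=1$), settling both directions in one computation. You instead pass through the bilinear criterion of Lemma \ref{ti}, obtain necessity by testing $x=e_i$, $y=e_j$ (essentially the same content as the paper's necessity), and establish sufficiency by the Lagrange-type identity
\[
\lng x,y\rng-\lng u,x\rng\lng u,y\rng=\sum_{i<j}\bigl(u^jx^i-u^ix^j\bigr)\bigl(u^jy^i-u^iy^j\bigr),
\]
whose summands are visibly nonnegative on the orthant when $u^iu^j\le 0$; I checked the identity and the sign argument, and they are sound, as is the normalization to $\|u\|=1$. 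What the paper's route buys is brevity: checking the cone's generators suffices, with no quadratic-form bookkeeping. What your route buys is an explicit sum-of-nonnegative-terms certificate for the inequality of Lemma \ref{ti} over all of $K\times K$, in the same spirit in which that lemma is exploited for the Lorentz cone in Lemma \ref{tli}; the price is exactly the pairwise factorisation you identify as the crux.
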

\begin{proof}
By Corollary \ref{hypereqv} it is enough to prove that $P_H$ is isotone if and only if
the conditions of the lemma hold.

In the following reasoning, for sake of simplicity, suppose that 
$\|u\|=1.$
Since $P_H$ is linear, in order to characterize the hyperplane
$H$ with the property that $x\leq y$ implies $P_Hx\leq P_Hy$, it is
sufficient to give necessary and sufficient conditions on
the unit vector $u$ such that
\begin{equation}\label{charH}
P_H e_i\geq 0,\;\;i=1,...,m,
\end{equation}
where $e_i=(0,...,0,1,0...0), \;i=1,...,m$ are the standard unit vectors
of the Cartesian reference system.

Since $u$ is a unit vector, the conditions (\ref{charH})
can be written in the form:
\begin{equation}\label{charu}
P_He_i =e_i-\lng u,e_i\rng u=(0,...,0,1,0,...,0)-u^i(u^1,...,u^m)\geq 0,\;i=1,...,m.
\end{equation}
These conditions yield
\begin{equation}\label{nonpositive}
u^iu^j \leq 0,\;\textrm{whenever}\; i\not=j,
\end{equation}
and
\begin{equation}\label{triv}
1-(u^i)^2\geq 0,\;i=1,...,m.
\end{equation}
But the conditions (\ref{triv}) are trivially satisfied
by the condition $\|u\|=1$.

If $\|u\|\not= 1,$ we can carry out the proof with $u/\|u\|$
in place of $u$ and we get the same conditions (\ref{nonpositive})
on the coordinates of $u$.

\end{proof}

By putting together Theorem \ref{FOO}, Lemma \ref{NishOk} and Lemma \ref{foo}, we obtain the 
following corollary:

\begin{corollary}

Let $C$ be a closed convex set with nonempty interior of the coordinate-wise ordered
Euclidean space $\R^m$. Then, the following assertions are equivalent
\begin{enumerate}
\item [\emph{(i)}] The set $C$ is a sublattice;
\item [\emph{(ii)}]The projection $P_C$ is isotone;
\item [\emph{(iii)}]\begin{equation*}
C=\cap_{i=\N} H_-(u_i,a_i),
\end{equation*}
where each hyperplane $H(u_i,a_i)$ is tangent to $C$ and the normals $u_i$ 
are nonzero vectors $u_i=(u_i^1,...,u_i^m)$ with the properties 
	$u_i^ku_i^l\leq 0$
	whenever $k\not= l,\;\;i\in \N.$

\end{enumerate}
\end{corollary}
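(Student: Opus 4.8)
The plan is to obtain all three equivalences purely by assembling results already in hand, after the key observation that for $K=\R^m_+$ the generalized operations $\sa,\su$ are exactly the coordinate-wise lattice operations $\wedge,\vee$. Consequently a set is \emph{invariant} in the sense of Section~\ref{sec inv} precisely when it is a sublattice of the coordinate-wise ordered $\R^m$, so that assertion (i) is literally the statement ``$C$ is invariant.'' Everything then reduces to citing Theorems~\ref{FOO} and~\ref{ISO} together with Lemmas~\ref{NishOk} and~\ref{foo}.

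First I would establish (i)$\Leftrightarrow$(iii). Applying Theorem~\ref{FOO} with $K=\R^m_+$, the closed convex set $C$ with nonempty interior is invariant if and only if it admits a representation $C=\cap_{i\in\N}H_-(u_i,a_i)$ in which each $H(u_i,a_i)$ is tangent to $C$ and invariant. So the remaining task is to translate ``$H(u_i,a_i)$ is invariant'' into ``$u_i^k u_i^l\le 0$ for $k\neq l$.'' For a hyperplane through the origin this translation is exactly Lemma~\ref{foo}. For a general tangent hyperplane $H(u_i,a_i)$ I would invoke item~(ii) of Lemma~\ref{latprop}: writing $H(u_i,a_i)=H(u_i,0)+a_i$ with $a_i$ any point of the hyperplane, and using that invariance is preserved under translation, $H(u_i,a_i)$ is invariant exactly when $H(u_i,0)$ is, while the sign condition on the normal direction is untouched by the translation. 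Chaining these gives (i)$\Leftrightarrow$(iii).

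Next, (i)$\Rightarrow$(ii) is immediate from Theorem~\ref{ISO} applied with $K=\R^m_+$: an invariant closed convex set is an isotone projection set. The converse (ii)$\Rightarrow$(i) is precisely Lemma~\ref{NishOk}: isotonicity of $P_C$ forces $C$ to be a sublattice. Together these close the loop (i)$\Rightarrow$(ii)$\Rightarrow$(i), and combined with (i)$\Leftrightarrow$(iii) all three assertions are equivalent.

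The one genuine point requiring care is the passage from hyperplanes through the origin, which is the form in which Lemma~\ref{foo} is stated, to the arbitrary tangent hyperplanes delivered by Theorem~\ref{FOO}; but this is handled cleanly by the translation-invariance recorded in Lemma~\ref{latprop}(ii), and since the defining inequalities $u_i^k u_i^l\le 0$ depend only on the normal direction, nothing further needs to be verified. Beyond that bookkeeping the argument is a straight concatenation of the cited statements, so no computation beyond what already appears in Lemmas~\ref{foo} and~\ref{NishOk} and Theorems~\ref{FOO} and~\ref{ISO} is needed.
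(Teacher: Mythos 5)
Your proposal is correct and follows essentially the same route as the paper, which simply assembles Theorem \ref{FOO}, Lemma \ref{NishOk} and Lemma \ref{foo} (with Theorem \ref{ISO} implicitly supplying (i)$\Rightarrow$(ii), as you make explicit). Your extra care in reducing a tangent hyperplane $H(u_i,a_i)$ to the origin case of Lemma \ref{foo} via the translation invariance in Lemma \ref{latprop}(ii) is exactly the bookkeeping the paper leaves unstated, so nothing is missing.
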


The equivalence of items (i) and (iii) says slightly more than the main result
in \cite{QueyranneTardella2006}.
 
\section{Comments and open questions}\label{concl}
Motivated by isotone iterative methods for variational inequalities, the second author
put the following very general and still open question: Which are the closed convex sets
which possess a projection onto them which is isotone with respect to an order relation
defined by a given cone? A related at least as interesting question is: Which are the 
closed convex sets for which there exist a cone such that the projection onto them are 
isotone with respect to order relation defined by the cone? Although these very general 
questions seem extremely 
difficult to handle, the present paper partially answered the first question for 
self-dual cones. The investigation led to interesting connections with the invariant sets 
with respect to the extended lattice operations defined by a self-dual cone. Another 
question is: Can this invariance approach be extended for more general cones, e.g., by 
introducing extended lattice operations with respect to both the cone and its dual? 
We expect this paper to open a new area, providing a general tool for studying 
variational inequalities and related equilibrium problems by using isotonicity with 
respect to orders defined by cones, and greatly widening the field of similar previous 
investigations.

\bibliographystyle{abbrv}
\bibliography{selfdu}

\end{document}